\newcommand{\cp}[2]{{(#1,#2)=1}}
\renewcommand\AA{\mathbb{A}}
\newcommand\LL{\mathbf{L}}
\newcommand\A{\mathcal{A}}
\newcommand\QQ{\mathbb{Q}}
\newcommand\N{\mathcal{N}}
\newcommand\FF{\mathbb{F}}
\newcommand\RR{\mathbb{R}}
\newcommand\RRnn{\RR_{\ge 0}}
\newcommand\R{\mathcal{R}}
\newcommand\T{\mathcal{T}}
\newcommand\ZZ{\mathbb{Z}}
\newcommand\ZZp{\ZZ_{>0}}
\newcommand\ZZnz{\ZZ_{\ne 0}}
\newcommand\PP{\mathbb{P}}
\newcommand\xx{\mathbf{x}}
\renewcommand\tt{\mathbf{t}}
\newcommand\dd{\,\mathrm{d}}
\newcommand\Ptwo{{\PP^2}}
\newcommand{\base}[8]{\ee^{({#1},{#2},{#3},{#4},{#5},{#6},{#7},{#8})}}
\newcommand{\congr}[3]{{#1} \equiv {#2} \bmod{#3}}
\DeclareMathOperator{\Pic}{Pic}
\DeclareMathOperator{\vol}{vol}
\DeclareMathOperator{\Spec}{Spec}
\DeclareMathOperator{\Cox}{Cox}
\DeclareMathOperator{\rad}{rad}
\newcommand{\Aone}{{\mathbf A}_1}
\newcommand{\Atwo}{{\mathbf A}_2}
\newcommand{\Afour}{{\mathbf A}_4}
\newcommand{\Afive}{{\mathbf A}_5}
\newcommand{\Dfour}{{\mathbf D}_4}
\newcommand{\Dfive}{{\mathbf D}_5}
\newcommand{\Dsix}{{\mathbf D}_6}
\newcommand{\Esix}{{\mathbf E}_6}
\newcommand{\Eseven}{{\mathbf E}_7}
\newcommand{\Eeight}{{\mathbf E}_8}
\newcommand{\tS}{{\widetilde S}}
\renewcommand{\le}{\leqslant}
\renewcommand{\ge}{\geqslant}
\renewcommand\rho{\varrho}
\newcommand{\ee}{\boldsymbol{\eta}}
\newcommand{\ex}[1]{*+<10pt>[o][F]{#1}}
\newcommand\rto{\dashrightarrow}
\newcommand\e{\eta}
\newcommand\ep{\varepsilon}
\newcommand\midd{\,\Big|\,}
\newcommand\GG{\mathbb{G}}
\newcommand\Ga{\GG_\mathrm{a}}
\newcommand\Gm{\GG_\mathrm{m}}
\newtheorem{cor}{Corollary}
\newtheorem{theorem}{Theorem}
\newtheorem{lemma}[theorem]{Lemma}
\theoremstyle{definition}
\newtheorem{remark}[theorem]{Remark}
\newtheorem*{ack}{Acknowledgements}
\numberwithin{equation}{section}
\begin{document}

\title[Quadratic congruences and rational points on cubic surfaces]
{Quadratic congruences on average and rational points on cubic surfaces}

\author{Stephan Baier}

\address{Tata Institute of Fundamental Research, 1 Dr.\ Homi Bhaba
  Road, Colaba, Mumbai 400005, India}

\email{sbaier@math.tifr.res.in}

\author{Ulrich Derenthal} 

\address{Institut f\"ur Algebra, Zahlentheorie und Diskrete
  Mathematik, Leibniz Universit\"at Hannover,
  Welfengarten 1, 30167 Hannover, Germany}

\email{derenthal@math.uni-hannover.de}

\begin{abstract}
  We investigate the average number of solutions of certain quadratic
  congruences. As an application, we establish Manin's conjecture for
  a cubic surface whose singularity type is $\Afive+\Aone$.
\end{abstract}

\keywords{Quadratic congruences, rational points, Manin's conjecture, cubic surfaces, universal torsors}

\subjclass{11D45 (14G05, 11G35)}

%
%

\date{July 22, 2015}

\maketitle

\tableofcontents

\section{Introduction}

Given a (possibly singular) del Pezzo surface $S$ defined over the
field $\QQ$ of rational numbers and containing infinitely many
rational points, we would like to study the distribution of these
points more precisely. We will be most interested in the cubic surface
of singularity type $\Afive+\Aone$ defined in $\PP^3$ by
\begin{equation}\label{eq:surface}
  x_1^3+x_2x_3^2+x_0x_1x_2 = 0.
\end{equation}

Let $H : S(\QQ) \to \RR$ be an anticanonical height function. The
number of rational points of bounded height on $S$ is dominated by the number of
points lying on the lines on (an anticanonical model of) $S$. Therefore, it is more
interesting to study rational points of height bounded by $B$ on the
complement $U$ of the lines on $S$, i.e., the number
\begin{equation*}
  N_{U,H}(B) = \#\{\xx \in U(\QQ) \mid H(\xx) \le B\}.
\end{equation*}

Manin's conjecture \cite{MR89m:11060} predicts that, as $B$ tends to $+ \infty$,
\begin{equation*}
  N_{U,H}(B) = c_{S,H}B(\log B)^{r-1} (1+o(1)),
\end{equation*}
where $r$ is the rank of the Picard group of (a minimal desingularization of)
$S$ and $c_{S,H}$ is a positive constant for which Peyre, Batyrev and
Tschinkel have given a conjectural interpretation \cite{MR1340296},
\cite{MR1679843}.

\begin{figure}[ht]
  \centering
  \includegraphics[width=11cm]{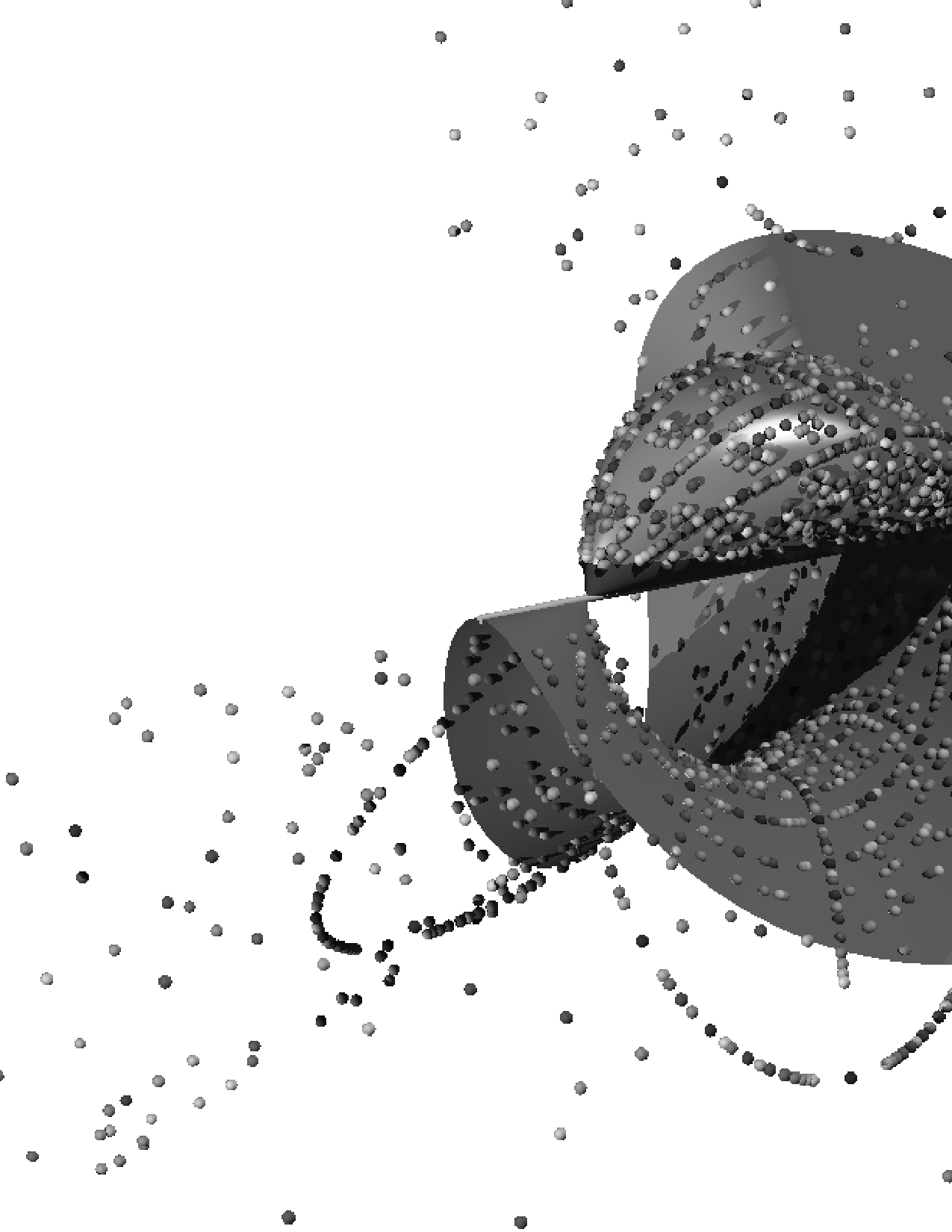}
  \caption{Points of height at most $100$ on the $\Afive+\Aone$ cubic surface.}
  \label{fig:d5}
\end{figure}

If $S$ is an equivariant compactification of an algebraic group $G$,
Manin's conjecture can be proved in certain cases. For instance, see
\cite{MR1620682} for the case of toric varieties (with $G = \Gm^2$),
\cite{MR1906155} for the case of the additive group $G=\Ga^2$ and
\cite{MR2858922} for certain semidirect products $G = \Ga \rtimes
\Gm$. However, the equation (\ref{eq:surface}) defines a cubic surface
that is not covered by any of these results (see \cite{MR2753646}, \cite{MR3333982}).

For general surfaces $S$, one can approach Manin's conjecture resorting to
universal torsors. Using Cox rings, a universal torsor $\T$ of a
minimal desingularization $\tS$ of a del Pezzo surface $S$ of degree
$d$ can be explicitly described as an open subset of an affine
variety $\Spec\Cox(\tS)$. The basic case is again the one of toric
varieties \cite{MR1679841}, where $\Spec\Cox(\tS) \cong \AA^{12-d}$ is an
affine space.

The next natural case is the one where $\Spec\Cox(\tS) \subset \AA^{13-d}$ is
a hypersurface, defined by one \emph{torsor equation} in the variables $\e_1,
\dots, \e_{13-d}$. For example, for our surface of degree $d=3$ and type
$\Afive+\Aone$, the torsor equation is
\begin{equation}\label{eq:torsor1}
  \e_1\e_{10} + \e_2\e_9^2+\e_4\e_5^2\e_6^4\e_7^3\e_8 = 0.
\end{equation}
All such del Pezzo surfaces are classified in \cite{MR3180592},
where a detailed description of $\Cox(\tS)$ is also given.

The passage to a universal torsor translates the problem of counting rational
points on $S$ to the one of counting tuples $(\e_1, \dots, \e_{13-d})$ of
integers satisfying the torsor equation and certain height and coprimality
conditions.

This is basically done as follows. The coprimality conditions can be taken
care of by M\"obius inversions (in this introduction, we will simply ignore
all auxiliary variables occuring because of this). Using a torsor equation
such as~(\ref{eq:torsor1}), we may eliminate one variable $\e_{13-d}$ that
occurs linearly in it. Fixing $\e_1, \dots, \e_{11-d}$, we are led to counting
the number of integers $\e_{12-d}$ satisfying a congruence condition modulo
some integer $q$ and lying in some range $I$ given by the height
conditions. In our example, the congruence condition is
\begin{equation*}
  \congr{\e_2\e_9^2}{-\e_4\e_5^2\e_6^4\e_7^3\e_8}{\e_1}.
\end{equation*}
Note that both $I$ and $q$ may depend on $\e_1, \dots, \e_{11-d}$.

If $\e_{12-d}$ also occurs linearly in the torsor equation then the congruence
is linear, so that the number of such $\e_{12-d}$ is basically
$q^{-1}\vol(I)+E$, where $E=O(1)$. Summing this over the remaining variables
$\e_1, \dots, \e_{11-d}$, we must estimate the main term $q^{-1}\vol(I)$ and
show that the contribution of the error term $E$ is negligible. The estimation
of the error term of the first summation is sometimes straightforward and
sometimes very hard. The estimation of the main term is expected to be often
straightforward using the results of \cite[Sections~4, 5, 7]{MR2520770} in
case of linear $\e_{12-d}$.

However, if $\e_{12-d}$ occurs with a square power in the torsor
equation (such as $\e_9^2$ in~\eqref{eq:torsor1}), the main term
contains an extra factor of the shape
\begin{equation}\label{eq:rho-term}
\N(a,q) = \#\{\rho \mid 1 \le \rho \le q,\ \cp{\rho}{q},\ \congr{\rho^2}{a}{q}\},
\end{equation}
where $a$ and $q$ are, basically, monomials in $\e_1, \dots, \e_{11-d}$ (for
instance $q=\e_1$ and $a=-\e_2\e_4\e_7\e_8$ in our example; see also
\cite[Proposition~2.4]{MR2520770}).  Our experience is that the presence of
$\N(a,q)$ usually makes the treatment of the error term in the next summation
over $\e_{11-d}$ (over some interval $J$) much harder.

Following the most natural order of summation (which is guided by the
requirement to start with the $\e_i$ that may be the largest), 
a term of the shape $\N(a,q)$ appears in the treatment of
the following singular del Pezzo surfaces (with one torsor equation):
\begin{itemize}
\item quartic del Pezzo surfaces of types $\Dfive$ and $\Afour$,
\item cubic surfaces of types $\Esix$, $\Dfive$, $\Afive+\Aone$,
\item del Pezzo surfaces of degree $2$ of types $\Eseven$, $\Esix$, $\Dsix+\Aone$,
\item del Pezzo surfaces of degree $1$ of types $\Eeight$,
  $\Eseven+\Aone$.
\end{itemize}

Let us sketch the effects of $\N(a,q)$ in the summation of the main term over
$\e_{11-d}$ in an interval $J$. To avoid complications which are irrelevant to
our point, we replace $q^{-1}\vol(I)$ by $1$ for the moment; this can be
restored by using partial summation.  If $\e_{11-d}$ occurs linearly in $a$,
we can switch the order of the summations over $\rho$ and $\e_{11-d}$. Then
the summation over $\e_{11-d}$ subject to the linear congruence modulo $q$
gives the main term $q^{-1}\vol(J)$ and an error term $F=O(1)$, which we must
sum over $\rho$ subject to $1 \le \rho \le q$ and $\cp{\rho}{q}$ and over the
remaining variables $\e_1, \dots, \e_{10-d}$.

The most naive estimation $\sum_{\rho=1}^q F = O(q)$ is usually not good
enough. This problem has been approached in several different ways.

\begin{itemize}
\item For the quartic $\Afour$ case \cite{MR2543667}, it is enough to
  obtain an extra saving by using different orders of summation over
  $\e_{11-d}$ and $\e_{10-d}$, depending on their relative size.
\item Alternatively, one can get an extra saving by making $F$ explicit,
  improving $O(q)$ to $O(q^{1/2+\ep})$ as in \cite[Lemma~3]{MR2320172} using
  Fourier series and quadratic Gauss sums, which is sufficient for the second
  summation for the quartic surface of type $\Dfive$ \cite{MR2320172} and for
  the cubic surface of type $\Esix$ \cite{MR2332351}; for the latter over
  imaginary quadratic fields, one can apply Poisson summation combined with
  Hua's results for exponentional sums over number fields \cite{MR3352254}.
\item For the cubic surface of type $\Dfive$ \cite{MR2520769}, the
  previous two approaches are combined and slightly improved.
\item For the degree $2$ del Pezzo surface of type $\Eseven$
  \cite{MR3100953}, the first two summations over
  $\e_{11-d},\e_{12-d}$ are treated simultaneously.
\end{itemize}
Furthermore, Manin's conjecture is known for some smooth and singular del
Pezzo surfaces of degree greater or equal to $3$ for which the factor
$\N(a,q)$ does not appear, in particular for certain singular cubic surfaces of
types $2\Atwo+\Aone$ \cite{MR2990624} and $\Dfour$ \cite{MR3263143}.

However, for other cases such as the cubic surface $S$ of type $\Afive+\Aone$,
different ideas seem to be needed. In our approach, the main novelty is that
we get cancellation effects from its summation over $\rho$, several variables
$\e_i$ occuring linearly in $a$ and, most importantly, a variable $\e_1$
occuring in $q$, while using the trivial $O(1)$-bound for $F$. This is done in
Section~\ref{sec:quadratic}, using the Polya-Vinogradov bound for character
sums and Heath-Brown's large sieve for real character sums \cite{MR1347489}.

In what follows, for $X>0$, the notation $x\sim X$ indicates that $X < x \le 2X$. Let $K_2,K_4,K_7,K_8, Q \geq 1/2$ and $K = K_2K_4K_7K_8$.
Applied to the cubic surface of type $\Afive+\Aone$, the most basic case of our result gives the asymptotic formula
\begin{equation}\label{eq:simple_case}
\sum_{\substack{\e_i \sim K_i \\ i=2,4,7,8}} \sum_{\e_1 \sim Q} \N(-\e_2\e_4\e_7\e_8,\e_1) = c K Q + O(K^{1-\delta} Q (\log Q)^{1+\varepsilon}),
\end{equation}
for some explicit $c,\delta>0$ and for any fixed $\varepsilon> 0$. 

Our result shall be compared with the work of Heath-Brown \cite[Section $5$]{MR2075628}.
In order to obtain an upper bound for $N_{U,H}(B)$ in the case of Cayley's cubic
surface, Heath-Brown proved that the left-hand side of~(\ref{eq:simple_case})
is $\ll K Q$. However, to obtain an asymptotic formula for $N_{U,H}(B)$ for the
cubic surface defined by the equation~(\ref{eq:surface}), we need an asymptotic
formula for the left-hand side of~(\ref{eq:simple_case}),
but also for the more complicated expression $\Sigma$ defined in~(\ref{expression}).

Comparing the proof of the asymptotic formula for $\Sigma$ stated in
Theorem~\ref{secondresult} and its application in Section~\ref{sec:application} with
Heath-Brown's work, we notice that our result involves several extra difficulties.
In particular, we have to isolate the main term, work out the case of even $q$,
include a weight function and some additional parameters, and finally work with ranges
for $\eta_1$ depending on the remaining variables.  This latter task is the main difficulty and its
resolution requires some extra tools such as Perron's formula.

It is also interesting to note that we essentially manage to remove the factor
$\N(a,q)$ from the main term of the first summation in
Lemma~\ref{lem:first_sum2}, so that we can continue the proof just as in the case of
linear $\e_{11-d}$ in the torsor equation.

As an application of our general estimate for the average number of solutions of our
quadratic congruence, we prove Manin's conjecture for the cubic surface $S$ of singularity type $\Afive+\Aone$
defined by the equation~(\ref{eq:surface}). The complement of the lines is
$U = S \setminus \{x_1 = 0\}$. We use the anticanonical height function defined by
$H(\xx) = \max\{|x_0|,\dots,|x_3|\}$ for $\xx=(x_0:\dots:x_3)$, where
$(x_0, \dots, x_3) \in \ZZ^4$ is such that $(x_0, \dots,x_3)=1$. See
Section~\ref{sec:geometry} for more information on the geometry of
$S$. Besides Theorem~\ref{secondresult}, our main result is as follows.

\begin{theorem}\label{thm:main}
Let $\ep>0$ be fixed. As $B$ tends to $+ \infty$, we have the estimate
  \begin{equation*}
    N_{U,H}(B) = c_{S,H}B(\log B)^6  + O(B(\log B)^{5 + \ep}),
  \end{equation*}
  where
  \begin{equation*}
    c_{S,H} = \frac{1}{172800} \cdot
      \omega_\infty \cdot \prod_p\left(1-\frac 1 p\right)^7\left(1+\frac 7 p+\frac
        1{p^2}\right),
  \end{equation*}
  and
  \begin{equation*}
    \omega_\infty =
    \int_{0\le|(x_1x_2)^{-1}(x_1^3+x_2x_3^2)|,|x_1|,x_2,|x_3|\le 1} 
    \frac{1}{x_1x_2} \dd x_1 \dd x_2 \dd x_3.
  \end{equation*}
\end{theorem}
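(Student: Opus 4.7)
The plan is to pass to the universal torsor $\T$ of the minimal desingularization $\tS$ and reduce $N_{U,H}(B)$ to a count of integer solutions $(\e_1,\ldots,\e_{10}) \in \ZZ^{10}$ to the torsor equation~\eqref{eq:torsor1}, subject to the pairwise coprimality conditions prescribed by $\Cox(\tS)$ (see \cite{math.AG/0604194}) and the height conditions coming from the sections $x_0,\ldots,x_3 \in \Cox(\tS)$. The coprimality conditions are handled by standard M\"obius inversions, and the variable $\e_{10}$ is eliminated since it appears linearly in~\eqref{eq:torsor1}, leaving the quadratic congruence $\e_2\e_9^2 \equiv -\e_4\e_5^2\e_6^4\e_7^3\e_8 \bmod \e_1$ on the remaining nine variables.

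Next, I would sum over $\e_9$ in the interval $I$ determined by the height conditions. A routine estimate for the integers in $I$ satisfying this quadratic congruence produces a main term proportional to $\N(-\e_2\e_4\e_7\e_8,\e_1)\vol(I)/\e_1$ plus a manageable error. The pivotal step is then the joint summation over $\e_1,\e_2,\e_4,\e_7,\e_8$: here I would invoke Theorem~\ref{secondresult}, the weighted and parametrised generalisation of~\eqref{eq:simple_case}, together with its consequence Lemma~\ref{lem:first_sum2}, to replace the $\N(a,q)$ factor by a smooth density. This is precisely what extracts enough cancellation to proceed as if $\e_9$ had occurred linearly in the torsor equation in the first place.

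The remaining summations over $\e_3,\e_5,\e_6$ and the M\"obius auxiliaries then follow the standard framework of \cite[Sections~4, 5, 7]{MR2520770}. At each stage I would extract a principal term of size $B(\log B)^k$ and control the error by $O(B(\log B)^{k-1+\ep})$, accumulating six factors of $\log B$ in agreement with the Picard rank $r=7$ of $\tS$. The leading constant is then identified with Peyre's conjectural value by recognising $\omega_\infty$ as the archimedean density obtained after converting back from $(\e_1,\ldots,\e_9)$ to $(x_0,\ldots,x_3)$, the Euler product as the limit of local $p$-adic densities of the torsor, and $1/172800$ as the combination of $\alpha(\tS)$ with the Tamagawa prefactor read off from the geometry described in Section~\ref{sec:geometry}.

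The principal difficulty is concentrated in the third step: the range of $\e_1$ depends on $\e_2,\e_4,\e_7,\e_8$ through the height inequalities, and it is precisely this dependence (treated via Perron's formula in the proof of Theorem~\ref{secondresult}) that forces the analytic framework of Section~\ref{sec:quadratic} rather than a direct manipulation of the average of $\N(a,q)$. Once past this step, the remaining summations and the extraction of the constant are lengthy but essentially mechanical given the machinery of \cite{MR2520770}; care is needed only to track the $(\log B)^{\ep}$ losses so that the final error is truly of the form $O(B(\log B)^{5+\ep})$.
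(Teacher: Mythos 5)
Your proposal follows the paper's actual proof essentially step for step: passage to the universal torsor (Lemma~\ref{lem:bijection}), first summation over $\e_9$ with $\e_{10}$ eliminated (Lemma~\ref{lem:first_sum}), application of Corollary~\ref{secondresultcorollary} to remove the $\N(a,q)$ factor via the joint sum over $\e_1,\e_2,\e_4,\e_7,\e_8$ (Lemma~\ref{lem:first_sum2}, the key novel step), and then \cite[Proposition~4.3]{MR2520770} for the remaining sums and the identification of the constant in Sections~\ref{sec:completion} and~\ref{sec:compatibility}. You also correctly flag the variable-range-of-$\e_1$ issue (handled by Perron's formula) as the crux that forces the analytic machinery of Section~\ref{sec:quadratic}, so the proposal is in accord with the paper.
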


We will check in Section~\ref{sec:compatibility} that this agrees
with Manin's conjecture and that the constant $c_{S,H}$ is the one predicted
by Peyre, Batyrev and Tschinkel.

\begin{ack}
  The first-named author was supported by an ERC grant 258713. The
  second-named author was supported by DFG grant DE 1646/2-1 and SNF
  grant 200021\_124737/1. This collaboration was started at the
  Sino-French Summer Institute in Arithmetic Geometry 2011 at the
  Chern Institute of Mathematics in Tianjin. It was also supported by
  the Center for Advanced Studies at LMU M\"unchen. The authors thank
  these institutions for their hospitality. We thank Pierre Le Boudec
  for his help with the first version of this article, and we thank
  the anonymous referee for helpful remarks.
\end{ack}

\section{Quadratic congruences on average}\label{sec:quadratic}

As explained in the introduction, our motivation to study quadratic
congruences in this section is their appearance in proofs of Manin's conjecture.

\subsection{Counting solutions of quadratic congruences} \label{counting} 

To evaluate the main term of the first summation over a variable occuring
non-linearly in the torsor equation (such as $\e_9$ in~\eqref{eq:torsor1} in
our example; see Lemma \ref{lem:first_sum} below for the result of the first
summation in our case and \cite[Proposition~2.4]{MR2520770} for the result in
a more general situation), we need to count solutions of quadratic congruences
on average. To this end, we consider the following general situation.
 
Let $b\in \ZZ\setminus\{0\}$, $k\in \ZZp$ with $(k,b)=1$, $r\in
\ZZp$ with $r\ge 2$ and $K_1,...,K_r,Q,V$ be positive real
numbers.  Throughout, for $X>0$, we use the notation $x\sim X$ to indicate that
$X < x \le 2X$. Let $b\in \ZZ\setminus\{0\}$, $k\in \ZZp$ with $(k,b)=1$, $r\in
\ZZp$ with $r\ge 2$ and $K_1,...,K_r,Q,V$ be positive real
numbers. We assume that $\Phi$ is a continuous real-valued function
defined on $(K_1,2K_1]\times\cdots \times (K_r,2K_r]\times (0,Q]$ which
satisfies
\begin{equation} \label{Vcond}
0\le \Phi\le V
\end{equation}
and, in each of the variables, can be divided into finitely many continuously
differentiable and monotone pieces whose number is bounded by an absolute constant.  We
further assume that $Q^{-}$ and $Q^+$ are continuous real-valued functions
defined on $(K_1,2K_1]\times\cdots \times (K_r,2K_r]$ such that
\begin{equation} \label{Qcond}
0<Q^-\le Q^+\le Q .
\end{equation}
Moreover, for any given $i\in \{1,...,r\}$, for $x_j \sim K_j$ for $j \in \{1,...,r\}\setminus\{i\}$, and for $0<y\le Q$, we assume that the set
\begin{equation} \label{Adef}
\begin{split}
& \mathcal{A}_i(x_1,...,x_{i-1},x_{i+1},...,x_r,y)\\ ={} & \left\{x_i \sim K_i \ |\ Q^{-}(x_1,...,x_r)<y\le Q^{+}(x_1,...,x_r)\right\} 
\end{split}
\end{equation}
is the union of finitely many intervals whose number is bounded by an absolute
constant. Throughout the sequel, for brevity, we write
\begin{equation} \label{Kdef}
K=2^{r+1}K_1\cdots K_r,
\end{equation}
\begin{equation*}
Q^{\pm}=Q^{\pm}(a_1,...,a_r),
\end{equation*}
and
\begin{equation} \label{Anotation}
\mathcal{A}_i(y)=\mathcal{A}_i(x_1,...,x_{i-1},x_{i+1},...,x_r,y).
\end{equation}
Finally, for any integer $n \in \ZZp$, we set
\begin{equation} \label{k1def}
\rad(n)=\prod\limits_{p|n} p.
\end{equation}

Our goal is to evaluate asymptotically the expression
\begin{equation} \label{expression}
  \Sigma =\sum\limits_{a_1\sim K_1} \cdots \sum\limits_{a_r \sim K_r}\
  \sum\limits_{Q^{-}<q\le Q^{+}} \Phi(a_1,...,a_r,q) \N(-a_1\cdots a_rb,kq),
\end{equation}
where $\N(-a_1\cdots a_rb,kq)$ is defined in~(\ref{eq:rho-term}).
 
We begin by splitting $\Sigma$ into a main term and an error term.  Let $kq=2^{v(kq)}h$,
where $v(\ell)$ is the 2-adic valuation of $\ell \in \ZZp$ and $h$ is odd.
Thus, for any $n \in \ZZ$, we have
\begin{equation} \label{splitting1}
\sum\limits_{\congr{\rho^2}{n}{kq}} 1 =\left(\sum\limits_{\congr{\rho^2}{n}{2^{v(kq)}}} 1 \right)\left(\sum\limits_{\congr{\rho^2}{n}{h}} 1\right).
\end{equation}
In the following, for $j \geq 0$, we set
\begin{equation*}
\left\{\frac{n}{2^j}\right\} = \sum\limits_{\substack{\rho \bmod{2^j}\\  \congr{\rho^2}{n}{2^j}}} 1.
\end{equation*}
It is well-known that if $(n,2^j)=1$, then
\begin{equation} \label{2symbol}
\left\{\frac{n}{2^j}\right\}=
\begin{cases}
1 & \mbox{if } j=0,\\
1 & \mbox{if } \congr{n}{1}{2} \mbox{ and } j=1,\\
2 & \mbox{if } \congr{n}{1}{4} \mbox{ and } j=2,\\
4 & \mbox{if } \congr{n}{1}{8} \mbox{ and } j\ge 3,\\
0 & \mbox{otherwise.}
\end{cases}
\end{equation}
Moreover, if $h$ is odd and $(n,h)=1$, then
\begin{equation} \label{quadcon}
\sum\limits_{\congr{\rho^2}{n}{h}} 1=\sum\limits_{d|h} \mu^2(d)\left(\frac{n}{d}\right).
\end{equation}
The equalities \eqref{splitting1}, \eqref{2symbol} and \eqref{quadcon} imply that if $(a_1\cdots a_rb,kq)=1$ then
\begin{equation}\label{Jacobi} 
  \N(-a_1\cdots a_rb,kq)
  =\left\{\frac{-a_1\cdots a_rb}{2^{v(kq)}}\right\}
  \sum\limits_{\substack{d|kq\\ (d,2)=1}} \mu^2(d)\left(\frac{-a_1\cdots a_rb}{d}\right).
\end{equation}
If $(a_1 \cdot a_rb,kq)\not=1$, then $ \N(-a_1\cdots a_rb,kq)=0$.
Therefore, we deduce that we can write
\begin{equation} \label{splitting}
\Sigma=M+E,
\end{equation}
where the main term $M$ is defined by
\begin{equation} \label{main}
M=\mathop{\sum\limits_{a_1\sim K_1}  \cdots \sum\limits_{a_r \sim K_r}\ \sum\limits_{Q^{-}<q\le 
Q^{+}} }_{(a_1\cdots a_rb,kq)=1} \Phi(a_1,...,a_r,q)\left\{\frac{-a_1\cdots a_rb}{2^{v(kq)}}\right\},
\end{equation}
and the error term $E$ is defined by
\begin{equation} \label{error}
\begin{split}
E={}& \mathop{\sum\limits_{a_1\sim K_1}  \cdots \sum\limits_{a_r \sim K_r}\ \ \sum\limits_{Q^{-}<q\le 
Q^{+}} }_{(a_1\cdots a_rb,kq)=1} \Phi(a_1,...,a_r,q) \left\{\frac{-a_1\cdots a_rb}{2^{v(kq)}}\right\} \times \\ & \sum\limits_{\substack{d|kq\\ d>1\\ (d,2)=1}} \mu^2(d)\left(\frac{-a_1\cdots a_rb}{d}\right).
\end{split}
\end{equation}
In the following sections, we estimate the error term by generalizing the method used by Heath-Brown in \cite[Section~5]{MR2075628}. We shall not evaluate the main term any further since this is not needed in our application. Our result is as follows.

\begin{theorem}\label{secondresult}
Let $\ep > 0$ be fixed. Set $L=\log(2+Q)$. We have the estimate
\begin{equation*} 
\Sigma - M \ll E',
\end{equation*}
where 
\begin{equation*}
E' = V K^{1/2+\varepsilon} Q L^{\varepsilon}
    \left( K^{1/2-1/2r}\rad(k)^{1/4} + |b|^{\varepsilon}2^{(1+\varepsilon)\omega(k)}+ 2^{\omega(k)}L \right) . 
\end{equation*}
\end{theorem}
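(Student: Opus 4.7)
The goal is to bound the error term $E$ defined in (\ref{error}) by $E'$. Since the 2-adic factor $\{-a_1\cdots a_rb/2^{v(kq)}\}$ is bounded by $4$ and depends only on $v(kq)$ and on the class of $-a_1\cdots a_r b$ modulo $8$, I would first split the outer sum into residue classes on which that factor is constant. Next, any squarefree $d\mid kq$ with $(d,2)=1$ factors uniquely as $d=d_1d_2$ with $d_1\mid\rad(k)$, $(d_2,2k)=1$, $d_2\mid q$ and $(d_1,d_2)=1$, so that the Jacobi symbol splits as $\chi_d=\chi_{d_1}\chi_{d_2}$. Pulling the $d_1$-sum (of at most $2^{\omega(k)}$ terms) outside reduces the problem to estimating, for each $d_1$, an inner sum
\begin{equation*}
S(d_1)=\sum_{\mathbf{a}}\ \sum_{\substack{d_2\ \text{odd sqfree}\\(d_2,k)=1}}\mu^2(d_2)\,\chi_{d_1d_2}(-a_1\cdots a_rb)\sum_{\substack{Q^{-}<q\le Q^{+}\\ d_2\mid q}}\Phi(\mathbf{a},q),
\end{equation*}
with the extra constraint $d_1d_2>1$.

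\emph{Disentangling the $\mathbf{a}$-dependent $q$-range.} The principal obstacle, flagged in the introduction, is that the range $(Q^{-},Q^{+}]$ depends on $\mathbf{a}$, so the $q$- and $\mathbf{a}$-sums cannot simply be swapped. Following the authors' hint, I would write the indicator of $q\in(Q^{-}(\mathbf{a}),Q^{+}(\mathbf{a})]$ via Perron's formula as a truncated Mellin integral, and use partial summation against the piecewise-monotone pieces of $\Phi$. This disentangles the $q$ and $\mathbf{a}$ dependences up to a twist $q^{-s}$ that does not interact with the Jacobi symbol, at the price of a factor $V$ coming from $\Phi$ and an $L^{\varepsilon}$ loss from the contour length. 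What remains is to bound pure character sums of the form $\sum_{\mathbf{a}}\sum_{d_2\sim D}\mu^2(d_2)\chi_{d_1d_2}(-a_1\cdots a_rb)$ over boxes in $\mathbf{a}$, for each dyadic $D\le Q$.

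\emph{Small and boundary $d_2$: Polya--Vinogradov.} For each dyadic $D\ge 1$, Polya--Vinogradov applied to the inner sum $\sum_{a_1\sim K_1}\chi_{d_1d_2}(a_1)$ gives $\ll(d_1d_2)^{1/2}\log(d_1d_2)$; trivially bounding the remaining $a_j$-sums, and summing over $d_2\sim D$ with $d_2\le Q$ and over $q$, with weight $V$, yields a contribution dominated by $VK^{1/2+\varepsilon}QL^{1+\varepsilon}2^{\omega(k)}$. The boundary case $d_2=1$ with $d_1>1$ loses the $d_2$-averaging and is handled separately: a divisor bound $\tau(b)\ll|b|^{\varepsilon}$ for the common factors of $b$ with $d_1\mid\rad(k)$, combined with a short character sum estimate in the $a_j$-variables, produces the contribution $VK^{1/2+\varepsilon}QL^{\varepsilon}|b|^{\varepsilon}2^{(1+\varepsilon)\omega(k)}$.

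\emph{Large $d_2$: Heath-Brown's quadratic large sieve.} For large $D$, Polya--Vinogradov is no longer sufficient and the main work enters. I would apply Cauchy--Schwarz over $\mathbf{a}$ (fixing $r-1$ of the variables and averaging symmetrically over the last) and then invoke Heath-Brown's quadratic large sieve \cite{MR1347489} to the resulting mean-square $\sum_{d_2\sim D}|\cdots|^2$ over odd squarefree moduli. The large-sieve bound $\ll(D+K)(DK)^{\varepsilon}\sum_{\mathbf{a}}1$, after taking square roots, optimising the dyadic threshold $D$, and summing over $d_1\mid\rad(k)$ with a further Cauchy--Schwarz that extracts the factor $\rad(k)^{1/4}$, delivers the leading term $VK^{1-1/(2r)+\varepsilon}QL^{\varepsilon}\rad(k)^{1/4}$; the exponent $1-1/(2r)$ reflects the Cauchy--Schwarz loss over the $r$ variables $a_1,\dots,a_r$. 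Adding these three contributions establishes $E\ll E'$.
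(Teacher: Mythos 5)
Your high-level plan --- factoring $d$ into a $k$-part $d_1$ and a $q$-part $d_2$, invoking Perron's formula to disentangle the $\mathbf a$-dependent $q$-range, and playing Polya--Vinogradov against Heath-Brown's large sieve --- is essentially the paper's, but the way you propose to organise the three pieces of $E'$ contains genuine gaps. The paper does \emph{not} split into dyadic ranges of $d_2$ with a different tool in each range. It fixes $g=(d,k)$ and $e=q/f$, establishes three bounds for $E(e,g)$ that hold \emph{simultaneously} for every $(e,g)$ --- Polya--Vinogradov in the variable $a_i$ with $K_i=\max_j K_j$ (giving $K^{1-1/r}Q^{3/2}e^{-3/2}g^{1/2}\cdots$), Polya--Vinogradov in the modulus variable $f$ with $\left(\frac{-a_1\cdots a_rb}{f}\right)$ viewed as a character of conductor $\asymp a|b|$ (giving $K^{3/2}Q^{1/2}e^{-1/2}|b|^{1/2}\cdots$), and the large sieve after Perron (giving $KQ^{1/2}e^{-1/2}+K^{1/2}Qe^{-1}$) --- and then interpolates $\min(A,B)\le A^{\mu}B^{1-\mu}$. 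The exponent $1-1/(2r)$ comes from $\mu\approx 1/2$ applied to the first and third bounds, $\rad(k)^{1/4}$ from the $g^{1/2}$ in the first bound under the same $\mu$ (not from an extra Cauchy--Schwarz over $d_1\mid\rad(k)$), and $|b|^{\varepsilon}$ from $\nu\approx 0$ applied to the second and third.

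Two concrete errors in your sketch. First, your source of $|b|^{\varepsilon}$ cannot be correct: the hypothesis $(k,b)=1$ forces $(d_1,b)=1$ for every $d_1\mid\rad(k)$, so a divisor bound for $(b,d_1)$ is vacuous. The $|b|^{\varepsilon}$ really comes from Polya--Vinogradov applied to the $f$-sum in the \emph{modulus} aspect, where $-a_1\cdots a_rb$ plays the role of the conductor; interpolation then reduces $|b|^{1/2}$ to $|b|^{\varepsilon}$. This second Polya--Vinogradov application is absent from your plan, and with it you also miss the necessary separate treatment of the tuples $\mathbf a$ for which $-a_1\cdots a_rb$ is a perfect square. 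Second, Polya--Vinogradov over the single variable $a_1$ cannot by itself deliver your claimed $VK^{1/2+\varepsilon}QL^{1+\varepsilon}2^{\omega(k)}$: trivially bounding $a_2,\dots,a_r$ loses a factor $K/K_1\ge K^{1-1/r}$, not $K^{1/2}$. The saving down to $K^{1/2}$ is precisely what the large sieve and the interpolation are needed for, so the term you attribute to "small $d_2$, Polya--Vinogradov alone" is not obtainable that way.
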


The term $\Sigma$ is not exactly the one that we need in our application. 
Let $\Sigma'$ be defined like $\Sigma$ in \eqref{expression}, but with some additional coprimality conditions included, namely
\begin{equation} \label{S'}
  \Sigma'= \mathop{\sum\limits_{\substack{a_1\sim K_1\\ (a_1,t_1)=1}} \cdots
    \sum\limits_{\substack{a_r \sim K_r\\ (a_r,t_r)=1}}}_{(a_i,a_j)=1, \ 1 \leq i < j \leq r}
  \sum\limits_{\substack{Q^{-}<q\le Q^{+}\\ (q,u)=1}} \Phi(a_1,...,a_r,q)
  \N(-a_1\cdots a_rb,kq),
\end{equation}
where $t_1,...,t_r,u\in \ZZp$. Accordingly, we set
\begin{equation}\label{main'}
  M'=\mathop{\sum\limits_{\substack{a_1\sim K_1\\
        (a_1,t_1)=1}} \cdots \sum\limits_{\substack{a_r \sim K_r\\
        (a_r,t_r)=1}}}_{(a_i,a_j)=1, \ 1 \leq i < j \leq r} \sum\limits_{\substack{Q^{-}<q\le Q^{+}\\ (q,u)=1\\
      (a_1\cdots a_rb,kq)=1}} \Phi(a_1,...,a_r,q)\left\{\frac{-a_1\cdots
      a_rb}{2^{v(kq)}}\right\}.
\end{equation}
Removing the additional coprimality conditions using M\"obius inversions,  we shall deduce  from Theorem \ref{secondresult} the following asymptotic formula for $\Sigma'$.

\begin{cor}\label{secondresultcorollary}
Let $\ep > 0$ be fixed. We have the estimate
\begin{equation*}
\Sigma' - M' \ll (1+\varepsilon)^{\omega(t_1)+\cdots +\omega(t_r)+\omega(u)} E'.
\end{equation*}
\end{cor}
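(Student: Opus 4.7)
The plan is to reduce $\Sigma'$ to a weighted sum of quantities of type $\Sigma$ treated by Theorem~\ref{secondresult}, by using Möbius inversion to eliminate each extra coprimality condition. Concretely, I would write
\[
\mathbf{1}[\cp{a_i}{t_i}] = \sum_{d_i \mid (a_i, t_i)} \mu(d_i), \qquad \mathbf{1}[\cp{q}{u}] = \sum_{e \mid (q, u)} \mu(e),
\]
and handle the pairwise conditions $\cp{a_i}{a_j}$ analogously by introducing divisors $f_{ij} \mid (a_i, a_j)$ weighted by $\mu(f_{ij})$. Interchanging summations and performing the linear substitutions $a_i = d_i \prod_{j \ne i} f_{ij}\cdot a_i'$ and $q = e q'$ turns each inner sum into one of type $\Sigma$ with $K_i$ replaced by $K_i$ divided by the relevant product of extracted divisors, $Q^{\pm}$ rescaled by $1/e$, and $b$ replaced by $b\cdot D_d \cdot D_f^2$, where $D_d = \prod_i d_i$ and $D_f = \prod_{i<j} f_{ij}$. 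Since $M'$ admits precisely the same decomposition as a weighted sum of inner main terms $M$, the difference $\Sigma' - M'$ is a weighted sum of inner differences $\Sigma - M$, each bounded by Theorem~\ref{secondresult} with these modified parameters.

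The remaining step is to sum the inner error bounds over the Möbius variables. Substituting $K' = K/(D_d D_f^2)$ and $b' = b D_d D_f^2$ into $E'$, the $f_{ij}$-dependence of each of the three summands of $E'$ is a negative power of $D_f$. For the first summand, the exponent on each $f_{ij}$ is $-2 + 1/r - 2\ep$, which is strictly less than $-1$ since $r\ge 2$, so $\sum_{f_{ij}\ge 1}(\cdot) \ll 1$. For the other two summands, the exponents are at or just beyond $-1$, but since $f_{ij} \le K^{1/2}$, the truncated sums contribute at most $K^{O(\ep)}$, which is absorbed into the existing $K^{\ep}$ factor of $E'$ after slightly shrinking the initial $\ep$. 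The sums over the squarefree divisors $d_i \mid t_i$ and $e \mid u$ are then treated multiplicatively: writing the $d_i$-dependence as a multiplicative function $X_i$ with $X_i(p) \ll p^{-\alpha}$ for some $\alpha>0$,
\[
\sum_{d_i \mid t_i} |\mu(d_i)|\, X_i(d_i) = \prod_{p \mid t_i} \bigl(1 + X_i(p)\bigr) \ll_\ep (1+\ep)^{\omega(t_i)},
\]
since $1 + X_i(p) \le 1+\ep$ for all primes beyond some $p_0(\ep)$, while the finitely many small-prime contributions are absorbed into an $\ep$-dependent implicit constant. The analogous bound for $e \mid u$ yields $(1+\ep)^{\omega(u)}$, and multiplying produces the claimed estimate.

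The main obstacle is verifying that, after each substitution $a_i = d_i\prod_j f_{ij}\cdot a_i'$, the resulting inner sum genuinely satisfies the hypotheses of Theorem~\ref{secondresult}. The composite weight $\Phi$ remains piecewise continuously differentiable and monotone in each new variable since the substitution is affine in $a_i'$, and the images of the sets $\mathcal{A}_i(y)$ under linear rescaling are still unions of a bounded number of intervals. Once these verifications are in place, the rest of the argument is routine bookkeeping.
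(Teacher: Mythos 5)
Your approach is correct and is essentially identical to the paper's own proof: Möbius inversion to strip the extra coprimality conditions (both the pairwise conditions $(a_i,a_j)=1$ and the conditions against $t_i,u$), substitution to reduce to inner sums of the form handled by Theorem~\ref{secondresult} with rescaled $K_i$, $Q^\pm$, and modified $b$ and $k$, followed by summing the resulting error bounds over the Möbius variables. Your exponent bookkeeping (the $-2+1/r-2\ep<-1$ exponent for the $f_{ij}$ from the first summand, the marginal $-1$ exponent handled by truncation at $K^{1/2}$, and the multiplicative treatment of $d_i\mid t_i$ and $e\mid u$ giving $(1+\ep)^{\omega(t_i)}$ and $(1+\ep)^{\omega(u)}$) supplies detail the paper itself elides, and the final paragraph correctly identifies and disposes of the one genuine verification — that the rescaled $\Phi$, $Q^\pm$ and $\mathcal{A}_i$ still satisfy the hypotheses.
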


\begin{remark} \label{intervalrem} Theorem \ref{secondresult} and Corollary \ref{secondresultcorollary} remain true if the left half-open $q$-summa\-tion interval $(Q^-,Q^+]$ is replaced by an arbitrary interval $\mathcal{I}(Q^-,Q^+)$ (left half-open, right half-open, open, closed) with endpoints $Q^-$ and $Q^+$.  The proof is the same, with the relevant summation intervals being altered accordingly.
\end{remark}

Theorem \ref{secondresult} and Corollary \ref{secondresultcorollary} trivially hold if
$K_i<1/2$ for some $i\in \{1,...,r\}$ or $Q<1$ since in this case we
have $\Sigma=M=0$. Therefore, we shall assume that $K_i\ge 1/2$ for any $i\in
\{1,...,r\}$ and $Q\ge 1$ throughout the following proofs of these
results.  
Therefore, recalling the definition \eqref{Kdef} of $K$, we note that $K \geq 2$. 

\subsection{Application of the Polya-Vinogradov bound I} \label{Polvin}
Let us write $d=fg$, where $g=(d,k)$. It follows that $(f,k/g)=1$ and so the condition $d|kq$ is equivalent to $f|q$. Thus, we can write $q=ef$.  Let us set
\begin{equation*}
\begin{split}
Q^-(e,g) & =  \max\{1/g,Q^{-}/e\}, \\
Q^+(e) & =  Q^+/e.
\end{split}
\end{equation*}
Reordering the summations and noting that $\mu^2(fg)=1$ if and only if $(f,g)=1$ and $\mu^2(f)=\mu^2(g)=1$,
the error term $E$ defined in \eqref{error} can be rewritten as
\begin{equation} \label{errorrewritten}
E=\sum\limits_{\substack{g|k\\ (g,2)=1}} \mu^2(g) \sum\limits_{\substack{e\le Q\\ (e,b)=1}} E(e,g), 
\end{equation}
where 
\begin{equation} \label{Segdef}
\begin{split}
E(e,g)={}&  \mathop{\sum\limits_{a_1\sim K_1}  \cdots \sum\limits_{a_r \sim K_r}}_{(a_1\cdots a_r,ke)=1} \left\{\frac{-a_1\cdots a_rb}{2^{v(ke)}}\right\}\sum\limits_{\substack{Q^-(e,g)<f\le 
Q^{+}(e)\\ (f,2k)=1}} \Phi(a_1,...,a_r,ef)\times\\ &  \mu^2(f)\left(\frac{-a_1\cdots a_rb}{fg}\right).
\end{split}
\end{equation}

In the following sections, we will estimate $E(e,g)$ in three different
ways. We start with an application of the Polya-Vinogradov bound for
character sums.  Pulling in the summation over $a_1$, we get
\begin{equation} \label{Seg}
\begin{split}
E(e,g)={}& \mathop{\sum\limits_{a_2\sim K_2}  \cdots \sum\limits_{a_r \sim K_r}}_{(a_2\cdots a_r,ke)=1}\  \sum\limits_{\substack{1/g<f\le 
Q/e\\ (f,2k)=1}} \mu^2(f) \left(\frac{-a_2\cdots a_rb}{fg}\right)\times\\ & \sum\limits_{h=1}^8 \left\{\frac{-ha_2\cdots a_rb}{2^{v(ke)}}\right\} 
\sum\limits_{\substack{a_1\in \mathcal{A}_1(ef)\\ \congr{a_1}{h}{8}\\
(a_1,ke)=1}}  \Phi(a_1,...,a_r,ef) \left(\frac{a_1}{fg}\right),
\end{split}
\end{equation}
where $\mathcal{A}_1(ef)$ is defined in \eqref{Adef} and \eqref{Anotation}.
In the following, we estimate the inner-most sum over $a_1$ under the
assumption $\mu^2(fg)=1$. Using partial summation and the assumptions on
$\Phi$ in Section 2.1 (in particular, \eqref{Vcond}), we get
\begin{equation} \label{partsum}
\sum\limits_{\substack{a_1\in \mathcal{A}_1(ef)\\ \congr{a_1}{h}{8}\\
(a_1,ke)=1}}  \Phi(a_1,...,a_r,ef) \left(\frac{a_1}{fg}\right) \ll V \cdot \sup\limits_{L_1<L_2} \left| \sum\limits_{\substack{L_1<a_1\le L_2\\ a_1\in \mathcal{A}_1(ef)\\ \congr{a_1}{h}{8}\\
(a_1,ke)=1}}  \left(\frac{a_1}{fg}\right)\right|.
\end{equation}
Removing the coprimality condition $(a_1,ke)=1$ using a M\"obius inversion, we obtain
\begin{equation} \label{mobi}
\sum\limits_{\substack{L_1<a_1\le L_2\\ a_1\in \mathcal{A}_1(ef)\\  \congr{a_1}{h}{8}\\
(a_1,ke)=1}}  \left(\frac{a_1}{fg}\right)= \sum\limits_{d|ke} \mu(d) \left(\frac{d}{fg}\right) 
\sum\limits_{\substack{L_1/d<a\le L_2/d\\ da\in \mathcal{A}_1(ef)\\ \congr{da}{h}{8}\\}}  \left(\frac{a}{fg}\right).
\end{equation}
Recalling the assumption that $\mathcal{A}_1(ef)$ is the union of
finitely many intervals whose number is bounded by an absolute constant, the Polya-Vinogradov bound for character sums gives
\begin{equation} \label{polvi}
\sum\limits_{\substack{L_1/d<a\le L_2/d\\ da\in \mathcal{A}_1(ef)\\ \congr{da}{h}{8}\\}}  \left(\frac{a}{fg}\right) \ll f^{1/2} g^{1/2}\log(fg),
\end{equation}
where we note that $fg$ is not a perfect square since $fg>1$ and $\mu^2(fg)=1$.
Combining \eqref{Seg}, \eqref{partsum}, \eqref{mobi} and \eqref{polvi}, we get
\begin{equation*}
E(e,g)\ll VK_2\cdots K_rQ^{3/2}e^{-3/2}g^{1/2}\log(2gQe^{-1}) 2^{\omega(ke)}.
\end{equation*}
Similarly, for every $i\in \{1,...,r\}$, we obtain
\begin{equation*} 
E(e,g)\ll V\cdot \frac{K_1\cdots K_r}{K_i}\cdot Q^{3/2}e^{-3/2} g^{1/2}\log(2gQe^{-1}) 2^{\omega(ke)}.
\end{equation*}
Hence, on taking $K_i$ as the maximum of $K_1,...,K_r$, it follows that
\begin{equation} \label{Sesti1}
E(e,g)\ll VK^{1-1/r}Q^{3/2}e^{-3/2} g^{1/2}\log(2gQe^{-1}) 2^{\omega(ke)},
\end{equation}
where $K$ is defined in \eqref{Kdef}.

\subsection{Application of the Polya-Vinogradov bound II} \label{PolvinII}
In this section, we set $a = a_1\cdots a_r$.  Alternatively, we may use the Polya-Vinogradov bound to treat the
inner-most sum over $f$ in \eqref{Segdef} non-trivially if $-ab$ is not a perfect square, which we assume in the following. Using partial summation and the bound \eqref{Vcond}, we deduce
\begin{equation} \label{partsum2}
\begin{split}
& \sum\limits_{\substack{Q^-(e,g) < f \le  Q^{+}(e) \\ (f,2k)=1}} \Phi(a_1,...,a_r,ef) \mu^2(f)\left(\frac{-ab}{fg}\right) \\
\ll {} & V \cdot \sup\limits_{Q^-(e,g)\le F_1<F_2\le Q^{+}(e)} \left| \sum\limits_{\substack{F_1<f\le F_2\\ (f,2k)=1}} \mu^2(f)\left(\frac{-ab}{f} \right)
 \right|.
\end{split}
\end{equation}
Using the well-known formula
\begin{equation*}
\mu^2(f)=\sum\limits_{d^2|f} \mu(d),
\end{equation*}
and writing $f=d^2\tilde{f}$, we get
\begin{equation} \label{remove}
\sum\limits_{\substack{F_1<f\le F_2\\ (f,2k)=1}} \mu^2(f)\left( \frac{-ab}{f} \right) = \sum\limits_{\substack{d\le F_2^{1/2}\\ (d,2abk)=1}} \mu(d)
\sum\limits_{\substack{F_1/d^2<\tilde{f}\le F_2/d^2\\ (\tilde{f},2k)=1}} \left( \frac{-ab}{\tilde{f}} \right).
\end{equation}
Removing the coprimality condition $(\tilde{f},k)=1$ using a M\"obius inversion, we obtain
\begin{equation} \label{mobi2}
\sum\limits_{\substack{F_1/d^2<\tilde{f}\le F_2/d^2\\ (\tilde{f},2k)=1}} \left( \frac{-ab}{\tilde{f}} \right)
= \sum\limits_{\substack{\tilde{d}|k\\ (\tilde{d},2)=1}} \mu(\tilde{d}) \left( \frac{-ab}{\tilde{d}} \right) \sum\limits_{\substack{F_1/(d^2\tilde{d})<f'\le F_2/(d^2\tilde{d})\\ (f',2)=1}} \left( \frac{-ab}{f'} \right).
\end{equation}
The Polya-Vinogradov bound gives
\begin{equation} \label{polvi2}
\sum\limits_{\substack{F_1/(d^2\tilde{d})<f'\le F_2/(d^2\tilde{d})\\ (f',2)=1}} \left( \frac{-ab}{f'} \right) \ll \left(a|b|\right)^{1/2} \log(2a|b|),
\end{equation}
where we recall our assumption that $-ab$ is not a perfect square.

Let $E'(e,g)$ be the contribution to $E(e,g)$ of those $a_1,...,a_r$  for which $-ab$ is not a perfect square. Then, combining \eqref{Qcond}, \eqref{partsum2}, \eqref{remove}, \eqref{mobi2} and \eqref{polvi2}, we get
\begin{equation} \label{Sestiprime}
E'(e,g) \ll VK^{3/2} Q^{1/2} e^{-1/2}|b|^{1/2}\log(K|b|) 2^{\omega(k)}.
\end{equation}
The remaining contribution $E^{\Box}(e,g)$ of perfect squares $-ab$ is trivially calculated to be 
\begin{equation} \label{remain}
E^{\Box}(e,g)\ll VK^{1/2+\varepsilon} Qe^{-1}.
\end{equation}
Combining \eqref{Sestiprime} and \eqref{remain}, we obtain
\begin{equation} \label{Sesti2}
E(e,g)\ll VK^{3/2} Q^{1/2} e^{-1/2}|b|^{1/2}\log(K|b|) 2^{\omega(k)}+VK^{1/2+\varepsilon} Qe^{-1}.
\end{equation}

\subsection{Application of Heath-Brown's large sieve} \label{Heabro}
Finally, we may make use of Heath-Brown's large sieve for real character sums to bound $E(e,g)$, which we shall do in the following. Let us set
\begin{equation*}
u_f=\Phi(a_1,...,a_r,ef) \mu^2(f)\left(\frac{-a_1\cdots a_rb}{fg}\right).
\end{equation*}
To make the summation ranges independent, we first remove the
summation condition $Q^-(e,g)<f\le Q^{+}(e)$ using Perron's formula, getting
\begin{equation} \label{perronrem}
\begin{split}
& \sum\limits_{\substack{Q^-(e,g)<f\le 
Q^{+}(e)\\ (f,2k)=1}} u_f = \\
& \frac{1}{2\pi i} \int\limits_{c-iT}^{c+iT} \left(\sum\limits_{\substack{1\le f \le Q/e\\ (f,2k)=1}} u_f f^{-s}\right)
\left(Q^{+}(e)^{s}-Q^-(e,g)^{s} \right) \frac{\mbox{\rm d} s}{s} + O\left(V+\frac{VQ\log 2Q}{eT}\right),
\end{split}
\end{equation}
where we have set $c=1/\log 2Q$ and we have used \eqref{Vcond}. Set
\begin{equation*}
T=2Q(\log 2Q)e^{-1},
\end{equation*}
\begin{equation*}
A(a_1,...,a_r;s)=\left(Q^{+}(e)^{s}-Q^-(e,g)^{s} \right)
\left\{\frac{-a_1\cdots a_rb}{2^{v(ke)}}\right\}\left(\frac{-a_1\cdots a_rb}{g}\right),
\end{equation*}
\begin{equation*}
B(f;s)=f^{-s} \mu^2(f)\left(\frac{-b}{f}\right).
\end{equation*}
and 
\begin{equation*} \label{Isdef}
\begin{split}
I(s)= \mathop{\sum\limits_{a_1\sim K_1}  \cdots \sum\limits_{a_r \sim K_r}}_{(a_1\cdots a_r,ke)=1} \sum\limits_{\substack{1\le f \le Q/e\\ (f,2)=1}}\Phi(a_1,...,a_r,ef) A(a_1,...,a_r;s) B(f;s)
\left(\frac{a_1\cdots a_r}{f}\right).
\end{split}
\end{equation*}
Then it follows from \eqref{perronrem} that
\begin{equation} \label{Seg3}
\begin{split}
E(e,g) & = \frac{1}{2\pi i} \int\limits_{c-iT}^{c+iT} I(s) \frac{\mbox{\rm d} s}{s} +O\left(VK\right)
\ll  \left(\log T\right) \sup\limits_{-T\le t \le T} |I(c+it)| +VK \\ & =   \left(\log T\right) |I(c+it_0)| +VK
\end{split}
\end{equation}  
for a particular $t_0\in [-T,T]$.  From
\cite[Corollary~4]{MR1347489}, a version of Heath-Brown's large sieve
for real character sums, we have
\begin{equation} \label{ls}
\sum\limits_{a_1\sim K_1}  \cdots \sum\limits_{a_r \sim K_r} \
\sum\limits_{\substack{1\le f \le F\\ (f,2)=1}}A'(a_1,...,a_r)B'(f) \left(\frac{a_1\cdots a_r}{f}\right) 
\ll \left(KF^{1/2}+K^{1/2}F\right)\left(KF\right)^{\varepsilon}
\end{equation}
whenever $A'(a_1,...,a_r),B'(f)\ll 1$ and $F\ge 1$, and where we note that
\begin{equation*}
\left| \mathop{\sum\limits_{a_1\sim K_1}  \cdots \sum\limits_{a_r \sim K_r}}_{a_1\cdots a_r=a}  A'(a_1,...,a_r) \right| \ll \tau_r(a) \ll a^{\varepsilon} 
\end{equation*}
for any given $a\in \ZZp$ and where $\tau_r$ denotes the Dirichlet convolution of the constant arithmetic function equal to $1$ by itself $r$ times.  Using the bound \eqref{ls} together with partial summation in $f$ to remove the weight function  $\Phi(a_1,...,a_r,ef)$, we deduce that
\begin{equation} \label{Seg4}
|I(c+it_0)| \ll  V\left(KQ^{1/2}e^{-1/2}+K^{1/2}Qe^{-1}\right)\left(KQe^{-1}\right)^{\varepsilon},
\end{equation}
where we take into account that 
\begin{equation*}
A(a_1,...,a_r;t_0) \ll 1,
\end{equation*}
\begin{equation*}
B(f;t_0) \ll 1.
\end{equation*}
Combining \eqref{Seg3} and \eqref{Seg4}, and noting that
\begin{equation*}
\log T = \log \frac{2Q\log 2Q}{e} = \log\left(\frac{2Q}{e}\right) + \log\log (2Q) \ll \left(\frac{Q}{e}\right)^{\varepsilon} \log^{\varepsilon}(2+Q),
\end{equation*}
we deduce that
\begin{equation} \label{Sesti4}
E(e,g) \ll  V\left(KQ^{1/2}e^{-1/2}+K^{1/2}Qe^{-1}\right)\left(KQe^{-1}\right)^{\varepsilon}  \log^{\varepsilon}(2+Q).
\end{equation}

\subsection{Proofs of Theorem \ref{secondresult} and Corollary \ref{secondresultcorollary}}

We start by proving Theorem~\ref{secondresult}.

\begin{proof}
Combining the three bounds \eqref{Sesti1}, \eqref{Sesti2} and \eqref{Sesti4}, we obtain
\begin{equation} \label{Seg5}
E(e,g)\ll \left(V \left( KQe^{-1} \right)^{\varepsilon}\log^{\varepsilon}(2+Q)\right)\mathbf{m}+VK^{1/2+\varepsilon} Qe^{-1},
\end{equation}
where 
\begin{equation*}
\begin{split}
\mathbf{m}={} & \min\Big\{K^{1-1/r}Q^{3/2}e^{-3/2} g^{1/2+\varepsilon}, K^{3/2} Q^{1/2} e^{-1/2}|b|^{1/2+\varepsilon}2^{\omega(k)},\\ & \quad\quad\quad KQ^{1/2}e^{-1/2}+K^{1/2}Qe^{-1}\Big\}\\
\ll{}&  \min\left\{K^{1-1/r}Q^{3/2}e^{-3/2} g^{1/2+\varepsilon}, KQ^{1/2}e^{-1/2} \right\} +\\ &
\min\left\{K^{3/2} Q^{1/2} e^{-1/2}|b|^{1/2+\varepsilon}2^{\omega(k)}, K^{1/2}Qe^{-1}\right\}\\
\ll{} & \left(K^{1-1/r}Q^{3/2}e^{-3/2} g^{1/2+\varepsilon}\right)^{\mu}
\left(KQ^{1/2}e^{-1/2}\right)^{1-\mu}+\\ &
\left(K^{3/2} Q^{1/2} e^{-1/2}|b|^{1/2+\varepsilon}2^{\omega(k)}\right)^{\nu} \left(K^{1/2}Qe^{-1}\right)^{1-\nu}\\
\ll{} & K^{1-\mu/r}Q^{1/2+\mu}e^{-(1/2+\mu)}g^{\mu/2+\varepsilon}+
K^{1/2+\nu}Q^{1-\nu/2}e^{-(1-\nu/2)}|b|^{\nu/2+\varepsilon}2^{\nu \omega(k)}
\end{split}
\end{equation*}
for any $\mu,\nu\in [0,1]$. Choosing $(\mu,\nu)=(1/2-3\varepsilon, 4 \varepsilon)$, recalling \eqref{errorrewritten} and \eqref{Seg5}, and summing over $g$ and $e$ now gives
\begin{equation*} 
\begin{split}
E\ll {}&VK^{1-1/(2r)+\varepsilon}Q\rad(k)^{1/4}\log^{\varepsilon}(2+Q)\\ & 
+VK^{1/2+4 \varepsilon}Q|b|^{3\varepsilon}2^{(1+4\varepsilon)\omega(k)}\log^{\varepsilon}(2+Q)+
VK^{1/2+\varepsilon}Q\log(2+Q)2^{\omega(k)},
\end{split}
\end{equation*}
which ends the proof of Theorem \ref{secondresult}. 
\end{proof}

We can now deduce Corollary \ref{secondresultcorollary}. 

\begin{proof}
Removing all additional coprimality conditions separately using M\"obius inversions, i.e. the formula
\begin{equation*}
\sum\limits_{d|(m,n)} \mu(d)=\begin{cases} 1 & \mbox{ if } (m,n)=1\\  0 & \mbox{ otherwise,} \end{cases}
\end{equation*}
we are led to
\begin{equation} \label{lisa}
\begin{split}
  \Sigma'={}&\sum\limits_{\substack{(d_{\alpha,\beta})\in \ZZp^{r(r-1)/2}\\(1 \le \alpha < \beta \le r)}}
  \ \sum\limits_{d_1|t_1} \cdots
  \sum\limits_{d_r|t_r} \sum\limits_{d|u} \left(\prod\limits_{1\le
      i<j\le r} \mu(d_{i,j}) \right) \left(\prod\limits_{l=1}^r
    \mu(d_{l})\right) \mu(d) \times\\ & \Sigma((d_{i,j})_{1\le i<j\le
    r},d_1,...,d_r,d)
\end{split}
\end{equation}
with
\begin{equation} \label{homer}
\begin{split}
& \Sigma((d_{i,j})_{1\le i<j\le r},d_1,...,d_r,d)= \\ & \sum\limits_{a_1\sim K_1/D_1} \cdots \sum\limits_{a_r \sim K_r/D_r}\ \sum\limits_{Q^{-}/d<q\le 
Q^{+}/d}  \Phi(a_1D_1,...,a_rD_r,qd) \cdot \N(-aDb,kdq),
\end{split}
\end{equation}
where
\begin{equation*}
a= a_1\cdots a_r,
\end{equation*}
\begin{equation*}
D_i=\mbox{lcm}(d_i, d_{1,i},...,d_{i-1,i},d_{i,i+1},...,d_{i,r})
\end{equation*}
and 
\begin{equation*}
D=D_1\cdots D_r.
\end{equation*}
Using Theorem \ref{secondresult}, we obtain
\begin{equation} \label{marge}
\begin{split}
& \Sigma((d_{i,j})_{1\le i<j\le r},d_1,...,d_r,d) - M((d_{i,j})_{1\le i<j\le r},d_1,...,d_r,d) \ll \\ &
V\left(\frac{K}{D}\right)^{1/2+\varepsilon}\frac{Q}{d} L^{\varepsilon} \left( \left(\frac{K}{D}\right)^{1/2-1/2r} d^{1/4} \rad(k)^{1/4}+
|Db|^{\varepsilon} 2^{(1+\varepsilon)\omega(dk)}+2^{\omega(dk)}L \right),
\end{split}
\end{equation}
where $L=\log(2+Q)$ and 
\begin{equation*}
\begin{split}
& M((d_{i,j})_{1\le i<j\le r},d_1,...,d_r,d)= \\ & \sum\limits_{a_1\sim K_1/D_1}   \cdots \sum\limits_{a_r \sim K_r/D_r}\ \sum\limits_{Q^{-}/d<q\le 
Q^{+}/d}  \Phi(a_1D_1,...,a_rD_r,qd) \cdot \left\{\frac{-a_1\cdots a_rDb}{2^{v(kdq)}}\right\}.
\end{split}
\end{equation*}
Reverting all the M\"obius inversions carried out, we find that
\begin{equation*}
\begin{split}
M'={}& \sum\limits_{\substack{(d_{\alpha,\beta})\in \ZZp^{r(r-1)/2}\\(1 \le \alpha < \beta \le r)}}
\ \sum\limits_{d_1|t_1} \cdots \sum\limits_{d_r|t_r} \sum\limits_{d|u} \left(\prod\limits_{1\le i<j\le r} \mu(d_{i,j}) \right) 
\left(\prod\limits_{l=1}^r \mu(d_{l})\right) \mu(d) \times\\ & M((d_{i,j})_{1\le i<j\le r},d_1,...,d_r,d),
\end{split}
\end{equation*}
where $M'$ is defined in \eqref{main'}. Summing
up the error term in \eqref{marge} over $D\le K$ and $d\le Q^{-}$, and noting that the number of $d_{\alpha,\beta}$'s and $d_{\gamma}$'s such that
\begin{equation*}
D=D_1\cdots D_r= \prod\limits_{i=1}^r \mbox{lcm}(d_i, d_{1,i},...,d_{i-1,i},d_{i,i+1},...,d_{i,r})
\end{equation*}
is bounded by $O\left(D^{\varepsilon}\right)$, we get the error term claimed in Corollary~\ref{secondresultcorollary},
which ends the proof.
\end{proof}

\section{Counting rational points on a singular cubic surface}

In this part, we give a proof of Manin's conjecture (Theorem~\ref{thm:main})
for the singular cubic surface with $\Afive+\Aone$ singularity type. We will
apply our result on quadratic congruences (Corollary~\ref{secondresultcorollary}).

\subsection{Geometry}\label{sec:geometry}

Our cubic surface $S$ defined by~(\ref{eq:surface}) over the field $\QQ$ has
singularities only in $(0:0:1:0)$, of type $\Aone$, and $(1:0:0:0)$ of type
$\Afive$. It contains precisely two lines $\{x_1=x_2=0\}$ and
$\{x_1=x_3=0\}$. The complement of the lines is $U = \{\xx \in S \mid x_1\ne
0\}$.  It is rational, as one can see by projecting to $\Ptwo$ from one of the
singularities.

Its minimal desingularization $\tS$ is a blow-up of $\Ptwo$ in six points,
so $\Pic(\tS)$ is free of rank $7$. The Cox ring of $\tS$ has been determined in
\cite{MR3180592}. It has $10$ generators $\e_1, \dots, \e_{10}$
satisfying the relation~(\ref{eq:torsor1}). The configuration of the rational
curves on $\tS$ corresponding to the generators of $\Cox(\tS)$ is described by
the extended Dynkin diagram in Figure~\ref{fig:dynkin}, where each vertex
corresponds to a curve $E_i$ of $\e_i$, and an edge indicates that two curves
intersect.

\begin{figure}[ht]
  \centering
    \[\xymatrix@R=0.05in @C=0.05in{E_9 \ar@{-}[rrrrr] \ar@{-}[dr] \ar@{-}[dd]& & & & & \ex{E_2} \ar@{-}[dr]\\
      & \ex{E_8} \ar@{-}[r] & E_6 \ar@{-}[r] & \ex{E_7} \ar@{-}[r] & \ex{E_5} \ar@{-}[r] & \ex{E_4} \ar@{-}[r] & \ex{E_3}\\
      E_{10} \ar@{-}[rrrrr] \ar@{-}[ur] & & & & & E_1 \ar@{-}[ur]}\]
  \caption{Configuration of curves on $\tS$.}
  \label{fig:dynkin}
\end{figure}

\subsection{Passage to a universal torsor}\label{sec:torsor}

Let
\begin{equation*}
  \ee = (\e_1, \dots, \e_{10}), \quad \ee' = (\e_1, \dots, \e_8),\quad \ee^{(k_1, \dots, k_8)} = \e_1^{k_1}\cdots \e_8^{k_8},
\end{equation*}
for any $(k_1, \dots, k_8) \in \RR^8$.

For $i=1, \dots, 10$, let
\begin{equation}\label{eq:intervals}
  (\ZZ_i, J_i, J_i') =
  \begin{cases}
    (\ZZp, \RR_{\ge 1}, \RR_{\ge 1}), & i \in \{1, \dots, 6\},\\
    (\ZZp, \RR_{\ge 1}, \RRnn), & i = 7,\\
    (\ZZnz, \RR_{\le -1} \cup \RR_{\ge 1}, \RR), & i = 8,\\
    (\ZZ, \RR, \RR), & i \in \{9, 10\}.
  \end{cases}
\end{equation}
In the course of our argument, we estimate summations over $\e_i \in
\ZZ_i$ by integrations over $\e_i \in J_i$, which we enlarge to $\e_i
\in J_i'$ in (\ref{eq:new_region}).

\begin{lemma}\label{lem:bijection}
  We have
  \begin{equation*}
    N_{U,H}(B)= \# \{\ee \in \ZZ_1 \times \dots \times \ZZ_{10} \mid \text{(\ref{eq:torsor})--(\ref{eq:cpe}) hold}\}
  \end{equation*}
  with the torsor equation
  \begin{equation}\label{eq:torsor}
    \e_1\e_{10} + \e_2\e_9^2+\e_4\e_5^2\e_6^4\e_7^3\e_8 = 0,
  \end{equation}
  the height condition
  \begin{equation}\label{eq:height}
    h(\ee', \e_9;B) = B^{-1}\max\left\{
    \begin{aligned}
      &|\e_1^{-1}(\e_2\e_8\e_9^2+\e_4\e_5^2\e_6^4\e_7^3\e_8^2)|,|\base 1 1 2
      2 2 2 2 1|,\\
      &|\base 3 2 4 3 2 0 1 0|, |\base 0 1 1 1 1 1 1 1 \e_9|
    \end{aligned}
    \right\}\le 1
  \end{equation}
  and the coprimality conditions
  \begin{align}
    \label{eq:cpe910} &(\e_{10},\e_2\e_3\e_4\e_5\e_6\e_7)=(\e_9,\e_1\e_3\e_4\e_5\e_6\e_7)=1,\\
    \label{eq:cpe8} &\cp{\e_8}{\e_1\e_2\e_3\e_4\e_5\e_7}, \\
    \label{eq:cpe}
    &\begin{aligned}
      (\e_7,\e_1\e_2\e_3\e_4)&=(\e_6,\e_1\e_2\e_3\e_4\e_5)=(\e_5,\e_1\e_2\e_3)\\
      &=(\e_4,\e_1\e_2)=(\e_1,\e_2)=1.
    \end{aligned}
  \end{align}
\end{lemma}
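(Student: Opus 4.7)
The strategy is to realize the passage to the universal torsor via the Cox ring of $\tS$ described in \cite{math.AG/0604194}. Each generator $\e_i$ of $\Cox(\tS)$ corresponds to the negative curve $E_i$ of Figure~\ref{fig:dynkin}, and the anticanonical sections $x_0, x_1, x_2, x_3$ lift to monomials in $\e_1, \dots, \e_{10}$ whose exponents are dictated by the intersection pairing. First I would exhibit the explicit monomial map
\begin{equation*}
  x_0 = \e_8 \e_{10}, \quad x_1 = \base 1 1 2 2 2 2 2 1, \quad x_2 = \base 3 2 4 3 2 0 1 0, \quad x_3 = \base 0 1 1 1 1 1 1 1 \e_9,
\end{equation*}
and verify that substituting these monomials into $x_1^3 + x_2 x_3^2 + x_0 x_1 x_2 = 0$ reproduces the torsor equation~(\ref{eq:torsor}) after factoring out the common monomial $\base 3 3 6 5 4 2 3 2$. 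Using~(\ref{eq:torsor}) to eliminate $\e_{10}$ gives $x_0 = -\e_1^{-1}(\e_2 \e_8 \e_9^2 + \e_4 \e_5^2 \e_6^4 \e_7^3 \e_8^2)$, which explains the first entry of~(\ref{eq:height}).

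Once the monomial map is in hand, the other three data translate directly. The height constraint $H(\xx) \le B$ becomes~(\ref{eq:height}) because each $|x_j|$ matches one of the four entries of the maximum. The condition $x_1 \ne 0$ defining $U$ is equivalent to $\e_8 \ne 0$, since $\e_1, \dots, \e_7 > 0$, which accounts for $\ZZ_8 = \ZZnz$ in~(\ref{eq:intervals}). The projective equivalence $(x_0:\cdots:x_3) \sim (-x_0:\cdots:-x_3)$ is absorbed by declaring $\e_1, \dots, \e_7$ to be positive while permitting $\e_8, \e_9, \e_{10}$ to carry either sign, so that each projective representative is hit exactly once.

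The heart of the argument is to show that (\ref{eq:cpe910})--(\ref{eq:cpe}) are precisely the coprimality conditions needed for the monomial map to induce a bijection between $\ee$-tuples and primitive integer quadruples $(x_0, \dots, x_3) \in \ZZ^4$ with $\gcd = 1$ and $x_1 \ne 0$ lying on $S$. In the forward direction, (\ref{eq:cpe910})--(\ref{eq:cpe}) exclude common prime factors of $x_0, \dots, x_3$ by a valuation count: any prime $p$ dividing $\gcd(x_0, \dots, x_3)$ would have to divide at least two $\e_i$ corresponding to a forbidden pair in Figure~\ref{fig:dynkin}, contradicting one of the listed coprimality conditions. In the reverse direction, given a primitive solution, one reads off $\e_1, \dots, \e_8$ prime-by-prime from the $p$-adic valuations of $x_0, x_1, x_2, x_3$: each prime is assigned to the unique admissible "slot" permitted by the Dynkin configuration and the coprimality conditions. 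Finally $\e_9$ and $\e_{10}$ are recovered uniquely from $x_3$ and $x_0$ once the others are fixed.

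I expect the main obstacle to be the inverse direction of this bijection, that is, the prime-by-prime case analysis producing a unique $(v_p(\e_1), \dots, v_p(\e_8))$ for each prime $p$ compatible with both the monomial formulas and the coprimality conditions. This is notationally heavy but essentially a finite case analysis on the valuation patterns of $(x_0,\dots,x_3)$, following the established template used for similar singular del Pezzo surfaces of Picard rank $7$; compare the corresponding step in \cite{MR2320172}, \cite{MR2332351}, \cite{MR2520769}.
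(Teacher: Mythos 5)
Your proposal follows essentially the same approach as the paper: write down the explicit monomial map $\psi: \ee \mapsto (\e_8\e_{10}, \base 1 1 2 2 2 2 2 1, \base 3 2 4 3 2 0 1 0, \base 0 1 1 1 1 1 1 1 \e_9)$, check it carries the torsor equation into the surface equation (after cancelling $\base 3 3 6 5 4 2 3 2$), and verify that the coprimality conditions read off from the Dynkin diagram of Figure~\ref{fig:dynkin} make the image primitive, so that the height is the maximum of the coordinates and $\e_{10}$ can be eliminated via~(\ref{eq:torsor}). The paper cites the inductive blow-up procedure of \cite[Section~4]{MR2290499} rather than spelling out the prime-by-prime valuation argument you propose, but the content is the same, and your computations of $\psi$, the common factor, and the formula for $x_0$ are all correct.
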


\begin{proof}
  Based on the birational projection $S \rto \Ptwo$ from the
  $\Afive$-singularity and the structure of $\tS$ as a blow-up of
  $\Ptwo$ in six points, we prove as in \cite[Section~4]{MR2290499}
  that the map
  \begin{equation*}
    \psi: \ee \mapsto (\e_8\e_{10}, \base 1 1 2 2 2 2 2 1, \base 3
    2 4 3 2 0 1 0 , \base 0 1 1 1 1 1 1 1 \e_9),
  \end{equation*}
  gives a bijection between the rational points on $U$ and the set of
  $\ee \in \ZZ_1 \times \dots \times \ZZ_{10}$
  satisfying~(\ref{eq:torsor}) and the coprimality conditions encoded in the
  extended Dynkin diagram in Figure~\ref{fig:dynkin}, which are
  (\ref{eq:cpe910})--(\ref{eq:cpe}).

  We note that the coprimality conditions imply that the image of such
  $\ee$ under $\psi$ has coprime coordinates, so that the height of
  $\psi(\ee)$ is simply the maximum of their absolute
  values. Using~(\ref{eq:torsor}), we eliminate $\e_{10}$ and
  obtain~(\ref{eq:height}).
\end{proof}

\subsection{Counting points}

Recalling the definition~(\ref{eq:intervals}) of $J_i$, let
\begin{equation*}
  \R(B) = \{(\ee', \e_9) \in J_1 \times \dots \times J_9 \mid h(\ee', \e_9;B) \le 1\}
\end{equation*}
be the set whose number of lattice points we want to compare with its volume
(both under the torsor equation (\ref{eq:torsor}) and the coprimality
conditions~(\ref{eq:cpe910})--(\ref{eq:cpe})).

Recall the definition~(\ref{eq:rho-term}) of $\N(q,a)$.  Summing over $\e_9$,
with $\e_{10}$ as a dependent variable, we get:

\begin{lemma}\label{lem:first_sum}
  We have
  \begin{equation*}
    N_{U,H}(B) = \sum_{\ee' \in \ZZ_1 \times \dots \times \ZZ_8} \theta_1(\ee')V_1(\ee';B) + O(B(\log B)^3),
  \end{equation*}
  where
  \begin{equation}\label{eq:integral}
    V_1(\ee';B) = \int_{(\ee', \e_9) \in \R(B)} \e_1^{-1} \dd \e_9
  \end{equation}
  and
  \begin{equation*}
    \theta_1(\ee') = \sum_{\substack{k \mid \e_3 \\ \cp{k}{\e_2\e_4}}}
    \frac{\mu(k)\varphi^*(\e_3\e_4\e_5\e_6\e_7)}{k\varphi^*((\e_3,k\e_1))}
    \N(-\e_2\e_4\e_7\e_8,k\e_1)
  \end{equation*}
  if $\ee'$ satisfies the coprimality
  conditions~(\ref{eq:cpe8})--(\ref{eq:cpe}), while $\theta_1(\ee')=0$
  otherwise.
\end{lemma}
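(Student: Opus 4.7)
Starting from Lemma~\ref{lem:bijection}, I would fix $\ee' = (\e_1, \dots, \e_8)$ satisfying \eqref{eq:cpe8}--\eqref{eq:cpe} and carry out the summation over $\e_9 \in \ZZ$, with $\e_{10}$ determined by the torsor equation~\eqref{eq:torsor}. Existence of $\e_{10} \in \ZZ$ amounts to the quadratic congruence $\e_2\e_9^2 \equiv -\e_4\e_5^2\e_6^4\e_7^3\e_8 \pmod{\e_1}$.

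A first step is to determine which of the coprimalities in \eqref{eq:cpe910} are automatic. Using~\eqref{eq:torsor} together with $(\e_9,\e_1\e_3\e_4\e_5\e_6\e_7)=1$ and the hypotheses on $\ee'$, one checks that $(\e_{10},\e_i)=1$ holds for each $i \in \{2,4,5,6,7\}$; only $(\e_{10},\e_3)=1$ must be imposed, which I would do by M\"obius inversion $\mathbf{1}_{(\e_{10},\e_3)=1} = \sum_{k \mid (\e_3,\e_{10})} \mu(k)$, noting that the automatic coprimalities force the contribution of $k$ with $(k,\e_2\e_4) \ne 1$ to vanish. This produces the outer sum over $k$ in $\theta_1(\ee')$.

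For such $k$, the pair of conditions $\e_1 \mid \e_1\e_{10}$ and $k \mid \e_{10}$ combines into $\e_2\e_9^2 \equiv -\e_4\e_5^2\e_6^4\e_7^3\e_8 \pmod{k\e_1}$. Since $\e_2,\e_5,\e_6,\e_7$ are each coprime to $k\e_1$ (using $k \mid \e_3$ and the appropriate coprimalities in \eqref{eq:cpe8}--\eqref{eq:cpe}), the substitution $\rho \equiv \e_2(\e_5\e_6^2\e_7)^{-1}\e_9 \pmod{k\e_1}$ recasts this as $\rho^2 \equiv -\e_2\e_4\e_7\e_8 \pmod{k\e_1}$, whose solutions $\rho$ coprime to $k\e_1$ are counted by $\N(-\e_2\e_4\e_7\e_8,k\e_1)$. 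The coprimality $(\rho,k\e_1)=1$ automatically enforces coprimality of $\e_9$ to $k$ and $\e_1$; the remaining conditions $(\e_9,\e_3\e_4\e_5\e_6\e_7)=1$ contribute the density factor $\varphi^*(\e_3\e_4\e_5\e_6\e_7)/\varphi^*((\e_3,k\e_1))$, the denominator excluding the primes dividing $\e_3$ that are already handled by the modulus of $\N$ (the other $\e_j$ being coprime to $k\e_1$).

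For each of the $\N(-\e_2\e_4\e_7\e_8,k\e_1)$ admissible residue classes of $\e_9$ modulo $k\e_1$, I would approximate the count of $\e_9$ in the (boundedly many) intervals cut out by $h(\ee',\e_9;B) \le 1$ by the corresponding Lebesgue integral, producing the factor $1/(k\e_1)$. The $1/k$ combines with the M\"obius term to give the $k^{-1}$ in $\theta_1(\ee')$, while $\e_1^{-1}\,\mathrm d\e_9$ appears in the integrand of $V_1(\ee';B)$. Each such approximation introduces an error of $O(1)$ per $(\ee',k)$. The main obstacle is to control the accumulated error by $O(B(\log B)^3)$; I would use the bound $\N(a,q) \ll 2^{\omega(q)}$ together with standard divisor-sum estimates over the height-bounded region, in the spirit of \cite[Proposition~2.4]{MR2520770}.
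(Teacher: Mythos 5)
Your derivation is correct and is, in substance, precisely what the paper achieves by citing \cite[Proposition~2.4]{MR2520770}: that proposition packages exactly the chain you reconstruct (M\"obius inversion for $(\e_{10},\e_3)=1$, the observation that $(k,\e_2\e_4)>1$ forces the count to vanish, the substitution $\rho\equiv\e_2(\e_5\e_6^2\e_7)^{-1}\e_9$ producing $\N(-\e_2\e_4\e_7\e_8,k\e_1)$, the density factor $\varphi^*(\e_3\e_4\e_5\e_6\e_7)/\varphi^*((\e_3,k\e_1))$, and the lattice-point-by-integral approximation), so you are essentially unrolling the cited result inline. Two small remarks: the sentence ``the pair of conditions $\e_1\mid\e_1\e_{10}$ and $k\mid\e_{10}$'' should read ``integrality of $\e_{10}$ and $k\mid\e_{10}$''; and for the error estimate the paper's key extra step, which you leave implicit, is that each $(\ee',k)$ contributes $O\bigl(2^{\omega(k\e_1)+\omega(\e_3\e_4\e_5\e_6\e_7)}\bigr)$ (the $\N$-bound times the M\"obius divisors for the $\e_9$-coprimality), and the resulting sum $\sum_{\ee'}2^{\omega(\e_3)+\omega(\e_3\e_4\e_5\e_6\e_7)+\omega(\e_1\e_3)}$ is bounded by first using the height bound $\e_8\ll B/\base 1 1 2 2 2 2 2 0$ and then summing the seven remaining variables to obtain $O(B(\log B)^3)$; without pinning down that summation order the claimed power of $\log B$ is not yet established.
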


\begin{proof}
  Essentially because Figure~\ref{fig:dynkin} describing the coprimality
  conditions and the torsor equation~(\ref{eq:torsor}) have the right shape,
  we are in the position to apply the general result of
  \cite[Proposition~2.4]{MR2520770}. This gives the main term as above after
  we simplify the condition $\cp{k}{\e_2\e_4\e_5\e_6\e_7\e_8}$ in the
  summation over $k$ to $\cp{k}{\e_2\e_4}$, which is allowed because of $k
  \mid \e_3$ and (\ref{eq:cpe8})--(\ref{eq:cpe}).

  The sum of the error term over all relevant $\ee'$ is bounded by
  \begin{equation*}
    \begin{split}
      \sum_{\ee'}
      2^{\omega(\e_3)+\omega(\e_3\e_4\e_5\e_6\e_7)+\omega(\e_1\e_3)}
      &\ll \sum_{\e_1, \dots, \e_7}
      \frac{2^{\omega(\e_3)+\omega(\e_3\e_4\e_5\e_6\e_7)+\omega(\e_1\e_3)}B}{\base
        1 1 2 2 2 2 2 0}\\
      & \ll B(\log B)^3,
    \end{split}
  \end{equation*}
  where we use the second part of~(\ref{eq:height}) for the summation over
  $\e_8$.
\end{proof}

\subsection{Application of Corollary \ref{secondresultcorollary}}\label{sec:application}
Using Corollary \ref{secondresultcorollary}, we now want to prove that
Lemma~\ref{lem:first_sum} still holds when we replace the error term by
$O(B(\log B)^{4+\varepsilon})$ and $\theta_1$ in the main term by $\theta_1'$
with
\begin{equation*}
  \theta_1'(\ee') = \sum_{\substack{k \mid \e_3 \\ \cp{k}{\e_2\e_4}}}
  \frac{\mu(k)\varphi^*(\e_3\e_4\e_5\e_6\e_7)}{k\varphi^*((\e_3,k\e_1))} \left\{\frac{-\e_2\e_4\e_7\e_8}{2^{v(k\e_1)}}\right\}
\end{equation*}
if (\ref{eq:cpe8})--(\ref{eq:cpe}) hold and $\theta_1'(\ee')=0$
otherwise. Hence, we want to show the following.
\begin{lemma}\label{lem:first_sum2}
Let $\ep>0$ be fixed. We have
  \begin{equation*}
    N_{U,H}(B) = \sum_{\ee' \in \ZZ_1 \times \dots \times \ZZ_8} \theta_1'(\ee')V_1(\ee';B) + O(B(\log B)^{4+\varepsilon}).
  \end{equation*}
\end{lemma}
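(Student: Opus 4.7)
The plan is to deduce the lemma from Lemma~\ref{lem:first_sum} by showing
$$D(B) := \sum_{\ee' \in \ZZ_1 \times \dots \times \ZZ_8} \bigl(\theta_1(\ee')-\theta_1'(\ee')\bigr)V_1(\ee';B) \ll B(\log B)^{4+\varepsilon}.$$
By the decomposition \eqref{splitting}, replacing $\N(-\e_2\e_4\e_7\e_8,k\e_1)$ by the 2-adic factor $\{-\e_2\e_4\e_7\e_8/2^{v(k\e_1)}\}$ changes $\N$ by exactly the Jacobi-sum error $E$ from \eqref{error}, which is what Corollary~\ref{secondresultcorollary} controls. The first step is to reorder the outer sum so that $\e_3,\e_5,\e_6$ and the divisor $k\mid\e_3$ appear outside; the resulting inner quintuple sum in $(\e_1,\e_2,\e_4,\e_7,\e_8)$ matches $\Sigma'-M'$ in Corollary~\ref{secondresultcorollary} under the identification $q=\e_1$, $(a_1,a_2,a_3,a_4)=(\e_2,\e_4,\e_7,\e_8)$, $r=4$, $b=1$, with the parameters $t_i,u$ taken as the appropriate divisors of $\e_3\e_5\e_6$ that encode \eqref{eq:cpe8}--\eqref{eq:cpe}; the pairwise coprimalities $(a_i,a_j)=1$ required by the Corollary follow from \eqref{eq:cpe8}--\eqref{eq:cpe} directly.

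Next I would perform a dyadic decomposition $\e_1\sim Q$ and $\e_j\sim K_j$ for $j\in\{2,4,7,8\}$ with $K_j,Q\ge 1/2$ and $K:=K_2K_4K_7K_8$. On each dyadic box I take
$$\Phi(a_1,a_2,a_3,a_4,q) = V_1(\ee';B)\cdot \varphi^*(\e_4)\varphi^*(\e_7)/\varphi^*((\e_3,k\e_1)).$$
Using only the last entry of the height function \eqref{eq:height} yields the uniform bound $V_1(\ee';B) \ll B\e_1^{-1}(\e_2\e_3\e_4\e_5\e_6\e_7\e_8)^{-1}$, so on the dyadic box an admissible choice is $V \ll B(\e_3\e_5\e_6)^\varepsilon/(QK\e_3\e_5\e_6)$, the factor $(\e_3\e_5\e_6)^\varepsilon$ absorbing $1/\varphi^*((\e_3,k\e_1))$. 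The analytic hypotheses of Section~\ref{counting} (piecewise monotonicity of $\Phi$ in each variable, and the bounded-component property of $\A_i(y)$) follow from the fact that the inequalities defining $\R(B)$ are polynomial of small degree; the delicate point is the quadratic constraint on $\e_9$ from the first term of \eqref{eq:height}, but a direct case analysis shows that in each variable of interest the resulting slice decomposes into a bounded number of monotone pieces.

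Applying Corollary~\ref{secondresultcorollary} with $r=4$ (so that $K^{1/2-1/2r}=K^{3/8}$) and $b=1$ then gives, on each dyadic box, a bound of
$$2^{O(\omega(\e_3\e_5\e_6))}\,V K^{1/2+\varepsilon} Q L^\varepsilon \bigl(K^{3/8}\rad(k)^{1/4} + 2^{(1+\varepsilon)\omega(k)} + 2^{\omega(k)} L\bigr),$$
where $L=\log(2+Q)$. Substituting the bound for $V$ collapses the factor $VK^{1/2+\varepsilon}Q$ into $O(BK^{-1/2+\varepsilon}(\e_3\e_5\e_6)^{-1+\varepsilon}L^\varepsilon)$, which is independent of $Q$.

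Finally I would sum the resulting estimate over dyadic $K_j$ (a convergent geometric series in each of the four variables, contributing $O(1)$), dyadic $Q$ (contributing $O(\log B)$ terms), and then over $\e_3,\e_5,\e_6,k$, using the standard divisor estimates $\sum_{k\mid\e_3,\,\mu^2(k)=1}(\rad(k)^{1/4}+2^{O(\omega(k))})/k\ll\e_3^{\varepsilon}$ and $\sum_{n\le N}(1+\varepsilon)^{\omega(n)}/n\ll(\log N)^{1+\varepsilon}$, so that the triple sum over $\e_3,\e_5,\e_6$ contributes $(\log B)^{3+\varepsilon}$. Combining these factors with the extra $L$ from the third summand of the Corollary's error bound produces the target $O(B(\log B)^{4+\varepsilon})$. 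The main obstacle is the careful accounting of logarithmic factors---in particular, ensuring that the dyadic $Q$-sum and the $L$-factor from the Perron-step in the proof of Theorem~\ref{secondresult} do not together produce one more $\log B$ than allowed---together with the verification of the analytic hypotheses on $V_1(\ee';B)$ as a function of $\e_1$ across the crossover between the linear and quadratic regimes of the height constraints.
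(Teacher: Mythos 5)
Your overall strategy---subtracting off the main term of Lemma~\ref{lem:first_sum}, reordering so that $\e_3,\e_5,\e_6$ and $k\mid\e_3$ are outer, decomposing the inner sum over $\e_1,\e_2,\e_4,\e_7,\e_8$ dyadically, and hitting it with Corollary~\ref{secondresultcorollary} with $r=4$---is the same one the paper uses. The coprimality bookkeeping with the $t_i$ and $u$ is also essentially right. But there are two genuine gaps, and they are exactly the ones the paper spends the bulk of its effort on.

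First, your choice $\Phi = V_1\cdot\varphi^*(\e_4)\varphi^*(\e_7)/\varphi^*((\e_3,k\e_1))$ is inadmissible: the hypotheses of Section~\ref{counting} require $\Phi$ to split, in each of the sum variables $a_1,\dots,a_r,q$, into a \emph{bounded} number of continuously differentiable monotone pieces, and $\varphi^*(\e_4)$, $\varphi^*(\e_7)$ and $\varphi^*((\e_3,k\e_1))^{-1}$ are wildly oscillating arithmetic functions of $a_2=\e_4$, $a_3=\e_7$ and $q=\e_1$ respectively, not piecewise monotone in any useful sense. Bounding them by $(\e_3\e_5\e_6)^\ep$ only controls the size $V$, not the shape of $\Phi$. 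This is precisely why the paper first Möbius--inverts these arithmetic factors (equations~\eqref{arithfac1} and~\eqref{arithfac2}), introducing the auxiliary divisor variables $d_1\mid\e_3/k$, $d_4\mid\e_4$, $d_7\mid\e_7$, replacing $\e_1,\e_4,\e_7$ by $d_1\e_1',d_4\e_4',d_7\e_7'$, absorbing $d_4d_7$ into the parameter $b$, and taking $kd_1$ as the new modulus, so that the resulting $\Phi$ is just the piecewise-monotone function $V_1$. Without this step Corollary~\ref{secondresultcorollary} does not apply. Note also that $\varphi^*(\e_3\e_4\e_5\e_6\e_7)\neq\varphi^*(\e_3\e_5\e_6)\varphi^*(\e_4)\varphi^*(\e_7)$ in general, since $\e_3$ and $\e_4$ need not be coprime.

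Second, your log-count does not close. With the trivial bound $V_1\ll B/(\e_1\cdots\e_8)$, the third summand of $E'$ contributes $VK^{1/2+\ep}Q\cdot 2^{\omega(k)}L^{1+\ep}\ll B K^{-1/2+\ep}(\e_3\e_5\e_6)^{-1}L^{1+\ep}$ after substituting $V$. The dyadic $K$-sum converges, but the dyadic $Q$-sum gives $\sum_{Q\le B}(\log(2+Q))^{1+\ep}\asymp(\log B)^{2+\ep}$---not $(\log B)^{1+\ep}$ as you implicitly assume---and the triple sum over $\e_3,\e_5,\e_6$ contributes $(\log B)^{3+\ep}$, for a total of $O(B(\log B)^{5+\ep})$, one power too many. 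The paper avoids this by not using the trivial bound: it works with the interpolated bound~\eqref{eq:V1_boundb}, $V_1\ll B^{2/3}/|\base{1/3}{1/2}0{1/6}{1/3}{1/2}{2/3}{5/6}|$, and sums $\e_7$ and $\e_6$ \emph{first} using the height conditions encoded in~\eqref{eq:V1_bound} (passing from $\LL$ to $\LL'$). Because $\e_6$ and $\e_7$ occur with fractional exponents $1/2$ and $2/3$ in that bound, their sums do not cost a full $\log B$ each; only $\e_3,\e_5$ contribute logs, which pairs with the $(\log B)^{2+\ep}$ from the dyadic $\e_1$-sum to land at $(\log B)^{4+\ep}$. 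Without this refinement of the $V_1$-bound and the specific order of summation, the estimate falls short of the lemma.
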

  
\begin{proof} 
First, we write
\begin{equation*}
\sum\limits_{\ee'\in \ZZ_1\times\cdots \times \ZZ_8} \theta_1(\ee')V_1(\ee'; B)=F^+(B)+F^-(B),
\end{equation*}
where
\begin{equation*}
F^+(B)=\sum\limits_{\ee'\in \ZZp^7\times \ZZp}
\theta_1(\ee')V_1(\ee'; B),
\end{equation*}
and
\begin{equation*}
\ F^-(B)=\sum\limits_{\ee'\in \ZZp^7\times \ZZ_{<0}} \theta_1(\ee')V_1(\ee'; B).
\end{equation*}
The term $F^-(B)$ can be treated similarly as $F^+(B)$. Therefore, we confine
ourselves to the treatment of the term $F^+(B)$, which we now transform in
such a way that Corollary \ref{secondresultcorollary} can be applied.
 
For convenience, we break the summation ranges of $\eta_1, \eta_2, \eta_4, \eta_7$ and $\eta_8$
into dyadic intervals, i.e., we write
\begin{equation} \label{F+}
F^+(B) = \sum\limits_{\ee''\in \ZZp^3} \sum\limits_{k|\eta_3} \frac{\mu(k)}{k} \sum\limits_{L_1,L_2,L_4,L_7,L_8} W(\ee'',k,L_1,L_2,L_4,L_7,L_8),
\end{equation}
where $\ee''=(\eta_3,\eta_5,\eta_6)$ satisfies the coprimality conditions $(\eta_3,\eta_5\eta_6)=1=(\eta_5,\eta_6)$, the variables $L_1,L_2,L_4, L_7,L_8\ge 1/2$ run over powers of $2$, respectively, and
\begin{equation*}
\begin{split}
& W(\ee'',k,L_1,L_2,L_4,L_7,L_8) = \sum\limits_{\substack{\eta_1\sim L_1 \\ (\eta_1,\eta_5\eta_6)=1}} \varphi^{\ast}((\eta_3,k\eta_1))^{-1} \sum\limits_{\substack{\eta_4\sim L_4\\ (\eta_4,\eta_6)=1}} \sum\limits_{\substack{\eta_7\sim L_7\\ (\eta_7,\eta_3\eta_4)=1}} \\ & \varphi^{\ast}(\eta_3\eta_4\eta_5\eta_6\eta_7) \sum\limits_{\substack{\eta_2\sim L_2 \\ (\eta_2,\eta_4\eta_5\eta_6\eta_7)=1}} \sum\limits_{\substack{\eta_8\sim L_8 \\ (\eta_8,\eta_2\eta_3\eta_4\eta_5\eta_7)=1}} V_1(\ee; B) \N(-\e_2\e_4\e_7\e_8,k\e_1).
\end{split}
\end{equation*}
Here we note that the coprimality condition $(\eta_2\eta_4\eta_7\eta_8,k\eta_1)=1$ is contained in the definition of $\N(-\e_2\e_4\e_7\e_8,k\e_1)$.

To make Corollary \ref{secondresultcorollary} applicable, it is necessary to
remove the arithmetic factors $\varphi^{\ast}((\eta_3,k\eta_1))^{-1}$ and
$\varphi^{\ast}(\eta_3\eta_4\eta_5\eta_6\eta_7)$. We write
\begin{equation} \label{arithfac1}
\begin{split}
\varphi^{\ast}((\eta_3,k\eta_1))^{-1}&= \varphi^{\ast}(k\cdot (\eta_3/k,\eta_1))^{-1}=
\varphi^{\ast}(k)^{-1}\prod\limits_{\substack{p| (\eta_3/k,\eta_1)\\ p\nmid k}} \left(1+\frac{1}{p-1}\right)\\ 
&=  \varphi^{\ast}(k)^{-1}\sum\limits_{\substack{d_1| (\eta_3/k,\eta_1)\\ (d_1,k)=1}} 
\frac{\mu^2(d_1)}{\varphi(d_1)}
\end{split}
\end{equation}
and 
\begin{equation} \label{arithfac2}
\begin{split}
\varphi^{\ast}(\eta_3\eta_4\eta_5\eta_6\eta_7)&=  \varphi^{\ast}(\eta_3\eta_5\eta_6)
\prod\limits_{\substack{p| \eta_4\\ p\nmid \eta_3\eta_5\eta_6}} \left(1-\frac{1}{p}\right)
\prod\limits_{\substack{\tilde{p}| \eta_7\\ \tilde{p}\nmid \eta_3\eta_5\eta_6}} \left(1-\frac{1}{\tilde{p}}\right)\\ 
&=  \varphi^{\ast}(\eta_3\eta_5\eta_6)
\sum\limits_{\substack{d_4|\eta_4\\ (d_4,\eta_3\eta_5\eta_6)=1}} \frac{\mu(d_4)}{d_4}
\sum\limits_{\substack{d_7|\eta_7\\ (d_7,\eta_3\eta_5\eta_6)=1}} \frac{\mu(d_7)}{d_7},
\end{split}
\end{equation}
where we use the fact that $(\eta_4,\eta_7)=1$. Hence, we may write 
\begin{equation} \label{hence}
\begin{split}
 & W(\ee'',k,L_1,L_2,L_4,L_7,L_8)=\\ 
 & \frac{ \varphi^{\ast}(\eta_3\eta_5\eta_6)}{\varphi^{\ast}(k)} 
 \sum\limits_{\substack{d_1| \eta_3/k\\ (d_1,\eta_5\eta_6k)=1}} \sum\limits_{\substack{d_4\le 2L_4\\ (d_4,\eta_3\eta_5\eta_6)=1}} \sum\limits_{\substack{d_7\le 2L_7\\ (d_7,d_4\eta_3\eta_5\eta_6)=1}}\frac{\mu^2(d_1)\mu(d_4)\mu(d_7)}{\varphi(d_1)d_4d_7} \times \\ & W(\ee'',k,L_1,L_2,L_4,L_7,L_8,d_1,d_4,d_7),
\end{split}
\end{equation}
where 
\begin{equation*}
\begin{split}
 & W(\ee'',k,L_1,L_2,L_4,L_7,L_8,d_1,d_4,d_7)=\\ 
 & \sum\limits_{\substack{\eta_1'\sim L_1/d_1 \\ (\eta_1',\eta_5\eta_6)=1}} \sum\limits_{\substack{\eta_4'\sim L_4/d_4\\ (\eta_4',d_7\eta_6)=1}} \sum\limits_{\substack{\eta_7'\sim L_7/d_7\\ (\eta_7',d_4\eta_3\eta_4')=1}} \sum\limits_{\substack{\eta_2\sim L_2 \\ (\eta_2,d_4d_7\eta_4'\eta_5\eta_6\eta_7')=1}} \sum\limits_{\substack{\eta_8\sim L_8 \\
      (\eta_8,d_4d_7\eta_2\eta_3\eta_4'\eta_5\eta_7')=1}}\\ &  V_1(d_1\eta_1',\eta_2,\eta_3,d_4\eta_4',\eta_5,\eta_6,d_7\eta_7',\eta_8;
    B) 
  \N(-\e_2\e_4'\e_7'\e_8d_4d_7,kd_1\e_1').
  \end{split}
\end{equation*}

Now we observe that for $\eta_2,\eta_3,\eta_4',\eta_5,\eta_6,\eta_7',\eta_8>0$,  the set
\begin{equation*}
\left\{ y>0 \ |\  V_1(d_1y, \eta_2,\eta_3,d_4\eta_4',\eta_5,\eta_6,d_7\eta_7',\eta_8;B)> 0\right\}
\end{equation*}
is an interval. To evaluate $W(\ee'',k,L_1,L_2,L_4,L_7,L_8,d_1,d_4,d_7)$,
we shall apply Corollary \ref{secondresultcorollary} and Remark \ref{intervalrem} with 
\begin{equation*}
k \mbox{ replaced by } kd_1, \quad b=d_4d_7, \quad r=4,
\end{equation*}
\begin{equation*}
a_1=\eta_4',\quad
a_2=\eta_7',\quad a_3=\eta_2, \quad a_4=\eta_8, \quad q=\eta_1', 
\end{equation*}
\begin{equation*}
t_1=d_7\eta_6, \quad t_2=d_4\eta_3, \quad t_3=d_4d_7\eta_5\eta_6,\quad t_4=d_4d_7\eta_3\eta_5, \quad u=\eta_5\eta_6,
\end{equation*}
\begin{equation*}
K_1=L_4/d_4, \ \ K_2=L_7/d_7, \ \ K_3=L_2,\ \ K_4=L_8, \ \ Q=2L_1/d_1,
\end{equation*}
\begin{equation*}
\begin{split}
& \mathcal{I}(Q^-,Q^+)=\mathcal{I}\left(Q^-(\eta_4',\eta_7',\eta_2,\eta_8),Q^+(\eta_4',\eta_7',\eta_2,\eta_8)\right) = \\ 
& \left(L_1,2L_1\right]\cap \left\{ y>0 \mid  V_1(d_1y, \eta_2,\eta_3,d_4\eta_4',\eta_5,\eta_6,d_7\eta_7',\eta_8;B)> 0\right\},
\end{split}
\end{equation*}
\begin{equation*}
V=\sup_{\e_1 \sim L_1, \e_2 \sim L_2, \e_4 \sim L_4, \e_7\sim L_7, \e_8\sim L_8} V_1(\ee;B),
\end{equation*}
\begin{equation*}
\Phi(\eta_4',\eta_7',\eta_2,\eta_8,y)=
\begin{cases}
V_1(d_1y, \eta_2,\eta_3,d_4\eta_4',\eta_5,\eta_6,d_7\eta_7',\eta_8;B) & \mbox{ if } Q^-<y<Q^+, \\ 
\lim\limits_{z\downarrow Q^-} V_1(d_1z, \eta_2,\eta_3,d_4\eta_4',\eta_5,\eta_6,d_7\eta_7',\eta_8;B) & \mbox{ if } y\le Q^-, \\
\lim\limits_{z\uparrow Q^+} V_1(d_1z, \eta_2,\eta_3,d_4\eta_4',\eta_5,\eta_6,d_7\eta_7',\eta_8;B) & \mbox{ if } y\ge Q^+.
\end{cases}
\end{equation*}
It is easy to check that the so-defined functions $\Phi$, $Q^-$ and $Q^+$
satisfy the conditions in Section~\ref{counting}. Therefore, applying
Corollary \ref{secondresultcorollary} and Remark \ref{intervalrem} gives
\begin{equation} \label{splittingagain}
\begin{split}
W(\ee'',k,L_1,L_2,L_4,L_7,L_8,d_1,d_4,d_7)&=  M(\ee'',k,L_1,L_2,L_4,L_7,L_8,d_1,d_4,d_7)\\
&+E(\ee'',k,L_1,L_2,L_4,L_7,L_8,d_1,d_4,d_7),
\end{split}
\end{equation}
where 
\begin{equation} \label{frankfurt}
\begin{split}
  & M(\ee'',k,L_1,L_2,L_4,L_7,L_8,d_1,d_4,d_7)\\ =& \mathop{\sum\limits_{\substack{\eta_1'\sim L_1/d_1 \\ (\eta_1',\eta_5\eta_6)=1}} \sum\limits_{\substack{\eta_4'\sim L_4/d_4\\ (\eta_4',d_7\eta_6)=1}} \sum\limits_{\substack{\eta_7'\sim L_7/d_7\\ (\eta_7',d_4\eta_3\eta_4')=1}} \sum\limits_{\substack{\eta_2\sim L_2 \\ (\eta_2,d_4d_7\eta_4'\eta_5\eta_6\eta_7')=1}} \sum\limits_{\substack{\eta_8\sim L_8 \\
      (\eta_8,d_4d_7\eta_3\eta_4'\eta_5\eta_7')=1}}}_{(\eta_2\eta_4'\eta_7'\eta_8d_4d_7,kd_1\eta_1')=1} \\ &  V_1(d_1\eta_1',\eta_2,\eta_3,d_4\eta_4',\eta_5,\eta_6,d_7\eta_7',\eta_8;
    B) 
  \left\{\frac{\eta_2\eta_4'\eta_7'\eta_8d_4d_7}{2^{v(kd_1\eta_1')}}\right\}
\end{split}
\end{equation}
and
\begin{equation} \label{shanghai}
\begin{split}
  & E(\ee'',k,L_1,L_2,L_4,L_7,L_8,d_1,d_4,d_7)\\ \ll & 
  \sup_{\e_i \sim L_i} V_1(\ee;B) \cdot 
  \left(L_1(L_2L_4L_7L_8)^{7/8+\varepsilon}d_1^{-3/4}(d_4d_7)^{-7/8}k^{1/4}\right.\\ & \left. + 
    L_1(L_2L_4L_7L_8)^{1/2+4\varepsilon}d_1^{-1}(d_4d_7)^{-1/2}(\log4L_1)2^{(1+4\varepsilon)\omega(kd_1)}   \right)\times\\ & (1+\varepsilon)^{\omega(\e_3)+\omega(\e_5)+\omega(\e_6)}\log^{\varepsilon}(4L_1).
\end{split}
\end{equation}
Summing these contributions over $k$, $L_i$ and $d_i$, we deduce from \eqref{F+}, \eqref{hence}, \eqref{splittingagain}, \eqref{frankfurt} and \eqref{shanghai} that
\begin{equation} \label{F+break}
F^+(B)=M^+(B)+E^+(B),
\end{equation}
where 
\begin{equation*}
\begin{split}
M^+(B)={} & \sum\limits_{\ee''\in \ZZp^3}  \varphi^{\ast}(\eta_3\eta_5\eta_6)\sum\limits_{k|\eta_3} \frac{\mu(k)}{k\varphi^{\ast}(k)}  \sum\limits_{L_1,L_2,L_4,L_7,L_8} \sum\limits_{\substack{d_1| \eta_3/k\\ (d_1,\eta_5\eta_6k)=1}}
\sum\limits_{\substack{d_4\le 2L_4\\ (d_4,\eta_3\eta_5\eta_6)=1}} \\ &
\sum\limits_{\substack{d_7\le 2L_7\\ (d_7,d_4\eta_3\eta_5\eta_6)=1}}\frac{\mu^2(d_1)\mu(d_4)\mu(d_7)}{\varphi(d_1)d_4d_7} 
 M(\ee'',k,L_1,L_2,L_4,L_7,L_8,d_1,d_4,d_7),
\end{split}
\end{equation*}
and
\begin{equation} \label{E+}
E^+(B)\ll \sum\limits_{\ee''\in \ZZp^3}  (1+\varepsilon)^{\omega(\e_3)+\omega(\e_5)+\omega(\e_6)} \sum\limits_{L_1,L_2,L_4,L_7,L_8}
\LL \sup_{\e_i \sim L_i} V_1(\ee;B),
\end{equation}
where we have set
\begin{equation*}
\LL=L_1(L_2L_4L_7L_8)^{8/9}(\log4L_1)^{1+\varepsilon}.
\end{equation*}
Reverting the decompositions of the arithmetic functions in \eqref{arithfac1}
and \eqref{arithfac2}, combining the $\eta_1$-, $\eta_2$-, $\eta_4$-,
$\eta_7$- and $\eta_8$-ranges, and noting that if $k|\eta_3$ then the conditions
$(\eta_2\eta_4\eta_7\eta_8,k\eta_1)=1$ and $(k,\eta_2\eta_4)=1$ are equivalent,
we simplify the main term $M^+(B)$ into
\begin{equation} \label{M+}
M^+(B)=\sum\limits_{\ee'\in \ZZp^7\times \ZZp} \theta_1'(\ee')V_1(\ee'; B),
\end{equation}
where $\theta_1'(\ee')$ is defined in Lemma \ref{lem:first_sum2}.

Finally, we show that  $E^+(B)$  is an error term. To estimate $V_1$,
an application of \cite[Lemma~5.1]{MR2520770} gives
\begin{align}
  \label{eq:V1_bounda}V_1(\ee';B) & \ll \min \left\{
    \frac{B^{1/2}}{\e_1^{1/2}\e_2^{1/2}|\e_8|^{1/2}},
    \frac{B}{\base 0{1/2}0{1/2}12{3/2}{3/2}}\right\} \\
  \label{eq:V1_boundb}&\ll
  \frac{B^{2/3}}{|\base{1/3}{1/2}0{1/6}{1/3}{1/2}{2/3}{5/6}|} \\
  \label{eq:V1_bound}& = \frac{B}{|\base 1 1 1 1 1 1 1 1|} \left(\frac{B}{|\base 1 1 2 2 2 2
      2 1|}\frac{B}{|\base 3 2 4 3 2 0 1 0|}\right)^{-1/6},
\end{align}
where (\ref{eq:V1_boundb}) is the weighted average of the two parts
of~(\ref{eq:V1_bounda}), and~(\ref{eq:V1_bound}) indicates how the second and
third parts of the height condition~(\ref{eq:height}) will be used below when
summing over $\e_6,\e_7$.  Set
\begin{equation*}
\LL'=L_1(L_2L_4L_8)^{8/9}(\log4L_1)^{1+\varepsilon}.
\end{equation*}
Then, starting from \eqref{E+}, we see that
\begin{equation*}
  \begin{split}
    E^+(B) 
    &\ll  \sum_{L_1,L_2,L_4,L_7,L_8} \LL \sup_{\e_i \sim L_i} 
    \left(\sum_{\e_3, \e_5, \e_6}
      \frac{(1+\ep)^{\omega(\e_3)+\omega(\e_5)+\omega(\e_6)}B^{2/3}}{|\base{1/3}{1/2}0{1/6}{1/3}{1/2}{2/3}{5/6}|}\right)\\
      &\ll  \sum_{L_1,L_2,L_4,L_8}  \LL'
      \sup_{\e_i \sim L_i}
      \left(\sum_{\e_3, \e_5, \e_6,\e_7}
      \frac{(1+\ep)^{\omega(\e_3)+\omega(\e_5)+\omega(\e_6)}B^{2/3}}{|\base{1/3}{1/2}0{1/6}{1/3}{1/2}{2/3}{5/6}|}\right)\\
    &\ll\sum_{L_1,L_2,L_4,L_8} \LL' \sup_{\e_i \sim L_i}
    \left(\sum_{\e_3,\e_5} \frac{(1+\ep)^{\omega(\e_3)+\omega(\e_5)}B(\log
        B)^{\ep}}{|\e_1\e_2\e_3\e_4\e_5\e_8|}\right)\\
    &\ll\sum_{L_1,L_2,L_4,L_8} \LL' \sup_{\e_i \sim
      L_i} \frac{B(\log B)^{2+3\ep}}{|\e_1\e_2\eta_4\e_8|}\\
    &\ll\sum_{L_1,L_2,L_4,L_7} \frac{B(\log B)^{2+4\ep}(\log 4L_1)}{(L_2L_4L_8)^{1/9}}\\
    &\ll B(\log B)^{4+4\ep}.
  \end{split}
\end{equation*}
Combining this with \eqref{F+break} and \eqref{M+}, and treating $F^-(B)$
similarly as $F^+(B)$, we obtain the desired result.
\end{proof}

\subsection{Completion of the proof of Theorem \ref{thm:main}}\label{sec:completion}
For the proof of Theorem \ref{thm:main}, it remains to evaluate the main term
in Lemma \ref{lem:first_sum2} asymptotically. To this end, we would like to
apply \cite[Proposition~4.3]{MR2520770}. We note that $\theta_1'(\ee')$ is not
of the form considered in \cite[Section~7]{MR2520770} because of the extra
$2$-adic factor. However, this factor turns out to be $1$ on average, and the
remaining part of $\theta_1'(\ee')$ has the necessary properties. 
As in \cite[Definition~3.7]{MR2520770}, $\A(\theta_1'(\ee'),\e_8)$ denotes the
average size of $\theta_1'$ when summed over $\e_8$.

\begin{lemma}\label{lem:sum_e8}
  We have $\theta_1'(\ee') \in \Theta_{2,8}(C)$
  \cite[Definition~4.2]{MR2520770} for some $C \in \RRnn$, with
  \begin{equation*}
    \A(\theta_1'(\ee'),\e_8)
    = \theta_2(\e_1, \dots, \e_7) =
    \prod_p \theta_{2,p}(I_p(\e_1, \dots, \e_7)) \in \Theta_{4,7}'(2),
  \end{equation*}
  \cite[Definition~7.8]{MR2520770}, where $I_p(\e_1, \dots, \e_7) = \{i \in
  \{1, \dots, 7\} \mid p \mid \e_i\}$ and
  \begin{equation*}
    \theta_{2,p}(I) =
    \begin{cases}
      1, & I=\emptyset,\\
      1-\frac 1 p, & I=\{1\}, \{2\}, \{6\},\\
      (1-\frac 1 p)^2, &
      \begin{aligned}
        I=&\{4\}, \{5\}, \{7\}, \{1,3\}, \{2,3\},\\&\{3,4\}, \{4,5\}, \{5,7\},
        \{6,7\},
      \end{aligned}
      \\
      (1-\frac 1 p)(1-\frac 2 p), &I = \{3\},\\
      0, &\text{otherwise.}
    \end{cases}
  \end{equation*}
\end{lemma}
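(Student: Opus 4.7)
The plan is to verify the membership $\theta_1'(\ee') \in \Theta_{2,8}(C)$ first, then to compute the $\e_8$-average explicitly by separating the $2$-adic factor from the coprimality condition $(\e_8,\e_1\e_2\e_3\e_4\e_5\e_7)=1$, and finally to identify the resulting function of $\e_1,\dots,\e_7$ with the stated Euler product. The membership in $\Theta_{2,8}(C)$ is essentially a boundedness/multiplicativity check: the $2$-adic symbol $\{-\e_2\e_4\e_7\e_8/2^{v(k\e_1)}\}$ is bounded by $4$ by \eqref{2symbol}, the $k$-sum has at most $2^{\omega(\e_3)}$ terms, and the remaining quotient of $\varphi^{\ast}$'s is at most $1$; these bounds together with the explicit dependence on $\e_8$ only through residues mod $2^{v(k\e_1)}$ and coprimality to $\e_1\e_2\e_3\e_4\e_5\e_7$ are what the definition in \cite[Definition~4.2]{MR2520770} requires.

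The core computation is the $\e_8$-average. I would first show that $\{-\e_2\e_4\e_7\e_8/2^j\}$ has average $1$ over units $\e_8 \bmod 2^j$ for every $j\ge 0$: indeed \eqref{2symbol} gives contribution $4\cdot 2^{j-3}/2^{j-1}=1$ for $j\ge 3$, $2/2=1$ for $j=2$, and trivially $1$ for $j\le 1$; multiplication by the unit $-\e_2\e_4\e_7$ permutes the residues and so does not alter the average. Combined with the density $\varphi^{\ast}(\rad(\e_1\e_2\e_3\e_4\e_5\e_7))$ coming from $(\e_8,\e_1\e_2\e_3\e_4\e_5\e_7)=1$, this yields
\begin{equation*}
\A(\theta_1'(\ee'),\e_8)=\varphi^{\ast}(\rad(\e_1\e_2\e_3\e_4\e_5\e_7))\sum_{\substack{k\mid \e_3\\ (k,\e_2\e_4)=1}}\frac{\mu(k)\varphi^{\ast}(\e_3\e_4\e_5\e_6\e_7)}{k\,\varphi^{\ast}((\e_3,k\e_1))}.
\end{equation*}

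The remaining task is to factor this as $\prod_p \theta_{2,p}(I_p(\e_1,\dots,\e_7))$. By the coprimality conditions \eqref{eq:cpe}, the only indices $i,j$ for which $p\mid \e_i$ and $p\mid \e_j$ can simultaneously occur are listed among the sets $I$ with $\theta_{2,p}(I)\ne 0$ in the statement; all other subsets force a contradiction and so contribute $0$. For each feasible $I$, I would read off three ingredients at $p$: the density factor $(1-1/p)$ if $p\mid \e_1\e_2\e_3\e_4\e_5\e_7$, the $\varphi^{\ast}(\e_3\e_4\e_5\e_6\e_7)$ factor $(1-1/p)$ if $p\mid\e_3\e_4\e_5\e_6\e_7$, and the local contribution of the $k$-sum. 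For $I\subset\{1,2,4,5,6,7\}$ with $p\nmid\e_3$, the $k$-sum contributes trivially, giving the single or double $(1-1/p)$ factors claimed. The delicate case is $I=\{3\}$: here $p\nmid\e_1\e_2\e_4$ allows both $p\nmid k$ and $p\Vert k$; the first contributes $1$ while the second contributes $-\frac{1}{p}\cdot\frac{1}{1-1/p}=-\frac{1}{p-1}$, so the $k$-sum contributes $1-\frac{1}{p-1}=\frac{p-2}{p-1}$ at $p$, and combining with the density and $\varphi^{\ast}$ factors gives $(1-\tfrac1p)^2\cdot\tfrac{p-2}{p-1}=(1-\tfrac1p)(1-\tfrac2p)$, matching the formula.

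I expect the main obstacle to be the bookkeeping in this prime-by-prime verification, in particular confirming that for each of the ten listed two-element subsets $I$ the constraints from \eqref{eq:cpe}--\eqref{eq:cpe910} are consistent, that the $\varphi^{\ast}((\e_3,k\e_1))^{-1}$ factor interacts correctly with the Möbius sum over $k$, and that all unlisted sets $I$ are truly forbidden. Once the Euler factors are in hand, the verification $\theta_2 \in \Theta_{4,7}'(2)$ reduces to matching each $\theta_{2,p}(I)=1+O(1/p)$ against the shape prescribed in \cite[Definition~7.8]{MR2520770}, which is direct from the explicit formulas above.
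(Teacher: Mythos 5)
Your proposal is correct and follows essentially the same route as the paper: you compute the $\e_8$-average by showing the $2$-adic symbol averages to $1$ (the paper does this by reducing to $n=\min\{v(k\e_1),3\}$, Möbius-inverting, and using the identity $\mu(l)/(2l)=\mu(l)/l+\mu(2l)/(2l)$, which is the explicit version of your density argument), arrives at the same $\theta_2$, and then verifies the Euler factors case by case, with your $I=\{3\}$ calculation matching the one the paper leaves implicit. The only differences are stylistic (density language versus explicit congruence-resolution and Möbius inversion, and that $\varphi^*(\rad(n))=\varphi^*(n)$), not substantive.
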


\begin{proof}
  We will see that
  \begin{equation}\label{eq:average_e8}
    \sum_{0 < \e_8 \le t} \theta_1'(\ee') =
    t\theta_2(\e_1,\dots,\e_7)+O(2^{\omega(\e_1\e_2\e_3\e_4\e_5\e_7)+\omega(\e_3)}),
  \end{equation}
  where
  \begin{equation*}
    \theta_2(\e_1, \dots, \e_7) = \sum_{\substack{k \mid \e_3 \\ \cp{k}{\e_2\e_4}}}
    \frac{\mu(k)\varphi^*(\e_3\e_4\e_5\e_6\e_7)}{k\varphi^*((\e_3,k\e_1))}
    \varphi^*(\e_1\e_2\e_3\e_4\e_5\e_7)
  \end{equation*}
  if (\ref{eq:cpe}) holds and $\theta_2(\e_1, \dots, \e_7)=0$
  otherwise.

  We observe that $\theta_1'(\ee') \in \Theta_{1,8}(3,\e_8)$
  \cite[Definition~3.8]{MR2520770} since we have $\theta_1'(\ee') \ll
  \prod_{i=1}^8 (\varphi^*(\e_i))^2 \in \Theta_{0,8}(0)$
  \cite[Definition~3.2]{MR2520770} by \cite[Example~3.3]{MR2520770},
  and because $\theta_1'(\ee')$ as a function in $\e_8$ lies in
  $\Theta_0(0)$ \cite[Definition~3.7]{MR2520770}
  by~(\ref{eq:average_e8}), and because its average is $\theta_2(\e_1,
  \dots, \e_7) \ll \prod_{i=1}^7 (\varphi^*(\e_i))^2 \in
  \Theta_{0,7}(0)$ as before, and because the error term is $\ll
  \prod_{i=1}^7 4^{\omega(\e_i)} \in \Theta_{0,7}(3)$ also as in
  \cite[Example~3.3]{MR2520770}.

  Furthermore, we see that $\theta_2(\e_1, \dots, \e_7)$ has the form of
  \cite[Definition~7.8]{MR2520770}, and a computation shows that its local
  factors $\theta_{2,p}$ are as in the statement of the result, so
  $\theta_2(\e_1, \dots, \e_7) \in \Theta_{4,7}'(2)$, and $\theta_2(\e_1,
  \dots, \e_7) \in \Theta_{2,7}(C)$ for some $C \ge 3$ by
  \cite[Corollary~7.9]{MR2520770}. In total, this shows $\theta_1'(\ee') \in
  \Theta_{2,8}(C)$ \cite[Definition~4.2]{MR2520770}.

  It remains to prove~(\ref{eq:average_e8}). If (\ref{eq:cpe}) does not hold,
  both sides are $0$. Otherwise,
  \begin{equation*}
    \sum_{0 < \e_8 \le t} \theta_1'(\ee')
    = \sum_{\substack{k \mid \e_3 \\ \cp{k}{\e_2\e_4}}}
    \frac{\mu(k)\varphi^*(\e_3\e_4\e_5\e_6\e_7)}{k\varphi^*((\e_3,k\e_1))}
    \sum_{\substack{0 < \e_8 \le t\\\text{(\ref{eq:cpe8})}}}
    \left\{\frac{-\e_2\e_4\e_7\e_8}{2^{v(k\e_1)}}\right\}.
  \end{equation*}
  We must show that the inner sum over $\e_8$ is $t\varphi^*(\e_1\cdots\e_5\e_7) +
  O(2^{\omega(\e_1\cdots\e_5\e_7)})$. Let $n=\min\{v(k\e_1),3\}$. If $n=0$, this
  holds by M\"obius inversion. If $n>0$, (\ref{eq:cpe}) implies that
  $\e_2,\e_4,\e_7$ are odd. Then the inner sum equals (with
  $\overline{-\e_2\e_4\e_7}$ the multiplicative inverse of $-\e_2\e_4\e_7$ mod $2^n$)
  \begin{equation*}
    \sum_{\substack{0 < \e_8 \le
        t\\\cp{\e_8}{\e_1\cdots\e_5\e_7}\\\congr{\e_8}{\overline{-\e_2\e_4\e_7}}{2^n}}}
    2^{n-1} = \sum_{l \mid \e_1\cdots\e_5\e_7} \mu(l) \sum_{\substack{0<\e_8'\le
        t/l\\\congr{l\e_8'}{\overline{-\e_2\e_4\e_7}}{2^n}}} 2^{n-1}.
  \end{equation*}
  If $l$ is even, the congruence is never fulfilled, so the inner sum over
  $\e_8'$ is $0$. If $l$ is odd, the inner sum over $\e_8'$ is
  $\frac{2^{n-1}t}{2^nl}+O(1)= \frac{t}{2l}+O(1)$. In total, the inner sum
  over $\e_8$ is
  \begin{equation*}
    \begin{split}
      \sum_{\substack{l \mid \e_1\cdots\e_5\e_7\\2 \nmid l}}
      \frac{\mu(l)}{2l}t + O(2^{\omega(\e_1\cdots\e_5\e_7)}) &=
      \frac{1}{2}t \prod_{\substack{p\mid \e_1\cdots\e_5\e_7\\p \ne 2}}
      \left(1-\frac 1 p\right) + O(2^{\omega(\e_1\cdots\e_5\e_7)})\\
      &=
      \varphi^*(\e_1\cdots\e_5\e_7)t+O(2^{\omega(\e_1\cdots\e_5\e_7)}),
    \end{split}
  \end{equation*}
  since $n>0$ implies that $\e_1\e_3$ is even.
  Summing the error term over $k$ only gives another factor $2^{\omega(\e_3)}$.
\end{proof}

Because of (\ref{eq:V1_bound}) and Lemma~\ref{lem:sum_e8}, we are in the
position to apply \cite[Proposition~4.3]{MR2520770}, giving
\begin{equation}\label{eq:final_summations}
  \sum_{\ee' \in \ZZ_1 \times \dots \times \ZZ_8} \theta_1'(\ee')V_1(\ee';B)
  = c_0V_0(B) + O(B(\log B)^5(\log \log B)^2)
\end{equation}
with
\begin{equation*}
  V_0(B) = \int_{\ee'} V_1(\ee';B) \dd \ee' = \int_{(\ee',\e_9) \in \R(B)}
  \e_1^{-1} \dd \e_9\dd \ee'
\end{equation*}
and
\begin{equation*}
  c_0 = \A(\theta_1'(\ee'),\e_8, \dots, \e_1) = \A(\theta_2(\e_1, \dots,
  \e_7), \e_7, \dots, \e_1) = \prod_p \omega_p,
\end{equation*}
whose local factors can be computed from the presentation of $\theta_2$ in
Lemma~\ref{lem:sum_e8} by \cite[Corollary~7.10]{MR2520770} as
\begin{equation}\label{eq:p-adic_density}
  \omega_p = \left(1-\frac 1 p\right)^7 \left(1+\frac 7 p + \frac 1{p^2}\right).
\end{equation}

Recall the definition~(\ref{eq:intervals}) of $J_i'$. We define
\begin{equation*}
  \begin{split}
    &\R_1'(B) = \left\{(\e_1, \dots, \e_6) \in J_1' \times \dots \times J_6' \midd
      \begin{aligned}
        &\base 32432000 \le B,\\
        &\base 53642{-2}00 \ge B
      \end{aligned}
    \right\},\\
    &\R_2'(\e_1, \dots, \e_6;B) = \{(\e_7,\e_8,\e_9) \in J_7' \times J_8' \times J_9' \mid h(\ee',\e_9;B) \le 1\},\\
    &\R'(B) = \left\{(\ee',\e_9) \in \RR^9 \midd
      \begin{aligned}
        &(\e_1, \dots, \e_6) \in \R_1'(B),\\
        &(\e_7,\e_8,\e_9) \in \R_2'(\e_1, \dots, \e_6;B)
      \end{aligned}
      \right\},\\
    &V_0'(B) = \int_{(\ee',\e_9) \in \R'(B)} \e_1^{-1} \dd\e_9\dd\ee',
  \end{split}
\end{equation*}
where the definition of $\R'_1(B)$ is inspired by the description of
the polytope whose volume is $\alpha(S)$ in~(\ref{eq:alpha_polytope}).

We claim that
\begin{equation}\label{eq:new_region}
  V_0(B) = V_0'(B) + O(B(\log B)^5).
\end{equation}
Comparing their definitions, in particular $J_i$ and $J_i'$ for $i \in
\{6,8\}$, we see that we must remove the conditions $\e_6 \ge 1$ and $|\e_8|
\ge 1$ and add the two conditions from the definition of $\R_1'(B)$, all with
a sufficiently small error term. We do this in four steps as in
\cite[Lemma~8.7]{MR2520770}; the order is important:
\begin{enumerate}
\item Add $\base 3 2 4 3 2 0 0 0 \le B$: This does not change anything
  because this condition follows from $\e_7 \ge 1$ and $\base 3 2 4 3
  2 0 1 0 \le B$ by~(\ref{eq:height}).
\item Add $\base 53642{-2}00 \ge B$: Using
  \cite[Lemma~5.1(3)]{MR2520770} for the integration over
  $\e_7,\e_9$, we see that the error term is
  \begin{equation*}
    \ll \int \frac{B^{5/6}}{|\base{1/6}{1/2}{0}{1/3}{2/3}{4/3}{0}{7/6}|} \dd(\e_1, \dots, \e_6,\e_8).
  \end{equation*}
  Using the opposite of our new condition for the integration over
  $\e_6$ together with $1 \le \e_1, \dots, \e_5 \le B$ and $|\e_8|\ge
  1$, we see that this is $\ll B(\log B)^5$.
\item Remove $|\e_8|\ge 1$: Using \cite[Lemma~5.1(1)]{MR2520770} for
  the integration over $\e_9$, we see that the error term is
  \begin{equation*}
    \ll \int \frac{B^{1/2}}{\e_1^{1/2}\e_2^{1/2}|\e_8|^{1/2}} \dd\ee'.
  \end{equation*}
  Using $|\e_8|\le 1$, and $\base 32432010 \le B$ for
  $\e_7$, and $\base 53642{-2}00 \ge B$ for $\e_6$, and finally $1 \le
  \e_1, \dots, \e_5 \le B$, we see that this is $\ll B(\log B)^5$.
\item Remove $\e_7 \ge 1$: Using \cite[Lemma~5.1(2)]{MR2520770} for
  the integration over $\e_8,\e_9$, we see that the error is
  \begin{equation*}
    \ll \int \frac{B^{3/4}}{\base{1/4}{1/2}0{1/4}{1/2}1{3/4}0} \dd(\e_1, \dots, \e_7).
  \end{equation*}
  Using $0 \le \e_7 \le 1$ and $\base 32432000 \le B$ for $\e_5$ with
  $1 \le \e_1, \dots, \e_4, \e_6 \le B$, we see that this is $\ll B(\log
  B)^5$.
\end{enumerate}

Next, we claim as in \cite[Lemma~8.6]{MR2520770} that
\begin{equation}\label{eq:alpha_real_density}
  V_0'(B) = \alpha(S)\omega_\infty B(\log B)^6.
\end{equation}
Indeed, substituting
\begin{equation*}
  x_2=B^{-1}\base 32432010,\ x_1=B^{-1}\base 11222221,\ x_3=B^{-1}\base
  01111111 \e_9
\end{equation*}
into $\omega_\infty$ as in Theorem~\ref{thm:main}, where $\e_1, \dots,
\e_6$ should be regarded as parameters and $\e_7,\e_8,\e_9$ as
the new integration variables, we see that
\begin{equation*}
  \frac{B\omega_\infty}{\e_1\cdots\e_6} = \int_{(\e_7,\e_8,\e_9) \in \R_2'(\e_1, \dots, \e_6;B)} 
  \e_1^{-1} \dd(\e_7,\e_8,\e_9).
\end{equation*}
Finally, we see that
\begin{equation*}
  \alpha(S)(\log B)^6 = \int_{R_1'(B)} \frac{1}{\e_1\cdots\e_6} \dd(\e_1,\dots, \e_6)
\end{equation*}
by substituting $\e_i = B^{t_i}$ into $\alpha(S) = \vol(P') = \int_{\tt \in
  P'} \dd \tt$ (see (\ref{eq:alpha_polytope}) below).

Combining Lemma~\ref{lem:first_sum2} with~(\ref{eq:final_summations}),
(\ref{eq:p-adic_density}), (\ref{eq:new_region}) and~(\ref{eq:alpha_real_density})
completes the proof of Theorem~\ref{thm:main}.

\subsection{Compatibility with Manin's conjecture}\label{sec:compatibility}

As the rank of $\Pic(\tS)$ is equal to $7$ (see Section~\ref{sec:geometry}), the exponent of $\log B$ in
Theorem~\ref{thm:main} is as predicted by Manin's conjecture. By
\cite{MR1340296}, \cite{MR1679843}, we have conjecturally $c_{S,H} = \alpha(S)
\cdot \omega_H(S)$.

We have
\begin{equation*}
  \alpha(S) = \frac{\alpha(S_0)}{\#W(\Afive) \cdot \#W(\Aone)}
  = \frac{1}{180\cdot 6! \cdot 2!} = \frac{1}{172800}
\end{equation*}
by \cite[Table~1]{MR2318651} and \cite[Theorem~1.3]{MR2377367}, where
$S_0$ is a split smooth cubic surface. Since
\begin{equation*}
  \begin{split}
    [-K_\tS] &= [3E_1+2E_2+4E_3+3E_4+2E_5+E_7],\\
    [E_8]&=[2E_1+E_2+2E_3+E_4-2E_6-E_7],
  \end{split}
\end{equation*}
we also have $\alpha(S) = \vol(P)=\vol(P')$, where
\begin{equation}\label{eq:alpha_polytope}
  \begin{split}
    P &= \left\{(t_1, \dots, t_7) \in \RRnn^7 \midd
      \begin{aligned}
        &3t_1+2t_2+4t_3+3t_4+2t_5+t_7 = 1,\\ 
        &2t_1+t_2+2t_3+t_4-2t_6-t_7 \ge 0
      \end{aligned}
    \right\}\\
    \cong P'&=\left\{(t_1, \dots, t_6) \in \RRnn^6 \midd
      \begin{aligned}
        &3t_1+2t_2+4t_3+3t_4+2t_5 \le 1,\\
        &5t_1+3t_2+6t_3+4t_4+2t_5-2t_6 \ge 1
      \end{aligned}
    \right\}.
  \end{split}
\end{equation}

Furthermore,
\begin{equation*}
\omega_H(S) = \omega_\infty \prod_p \left(1-\frac 1 p\right)^7 \omega_p,
\end{equation*}
where
\begin{equation*}
  \omega_p = \frac{\#\tS(\FF_p)}{p^2} = 1+\frac 7 p + \frac 1{p^2}
\end{equation*}
because the minimal desingularization $\tS$ of $S$ is a blow-up of $\Ptwo$
(which has $p^2+p+1$ points over $\FF_p$) in six points (each replacing one
point by an exceptional divisor containing $\#\PP^1(\FF_p) = p+1$ points over
$\FF_p$).

We check using the techniques of \cite{MR1340296}, \cite{MR1679843}
that $\omega_\infty$ is as in Theorem~\ref{thm:main} since the Leray
form of $\tS$ is
\begin{equation*}
  \omega_L(\tS) = (x_1x_2)^{-1} \dd x_1 \dd x_2 \dd x_3
\end{equation*}
(where $x_1x_2$ is the derivative of~(\ref{eq:surface}) with
respect to $x_0$) and by writing $x_0$ in terms of $x_1,x_2,x_3$ using
the defining equation~(\ref{eq:surface}).

\bibliographystyle{alpha}

\bibliography{manin_dp3_a1a5_2}

\end{document}